\title{Equivalence of zero entropy and the Liouville property for stationary random graphs}
\newcounter{quotecount}
\newcommand{\MyQuote}[1]{\vspace{0.3cm}\refstepcounter{quotecount}%
     \parbox{10cm}{#1}\hspace*{2cm}(\arabic{quotecount})\\[0.3cm]}
\newcommand{\R}{\mathbb{R}}
\renewcommand{\H}{\mathbb{H}}
\newcommand{\Z}{\mathbb{Z}}
\newcommand{\N}{\mathbb{N}}
\newcommand{\E}{\mathbb{E}}
\renewcommand{\P}{\mathbb{P}}
\renewcommand{\d}{\mathrm{d}}
\newcommand{\g}{>}
\renewcommand{\l}{<}
\newcommand{\degree}{\text{deg}}
\newcommand{\G}{\mathcal{G}}
\newcommand{\F}{\mathcal{F}}
\renewcommand{\G}{\mathcal{G}}
\newcommand{\inv}{\mathrm{inv}}
\newtheorem{question}{Question}[section]
\newtheorem*{maintheorem}{Main Theorem}
\newtheorem{theorem}{Theorem}[section]
\newtheorem{lemma}[theorem]{Lemma}
\newtheorem{corollary}[theorem]{Corollary}
\newenvironment{remark}[1][Remark.]{\begin{trivlist}
\item[\hskip \labelsep \textbf{#1}]}{\end{trivlist}}
\author{Matias Carrasco Piaggio and Pablo Lessa}
\begin{document}

\maketitle

\begin{abstract}
We prove that any stationary random graph satisfying a growth condition and having positive entropy almost surely admits an infinite dimensional space of bounded harmonics functions.  Applications to random infinite planar triangulations and Delaunay graphs are given.
\end{abstract}
\begin{quote}
\footnotesize{\textbf{Keywords}: stationary graphs, Liouville property, Delaunay graphs, random triangulations}
\end{quote}
\begin{quote}
\footnotesize{\textbf{AMS MSC 2010}: 05C80, 28D20, 60B05.}
\end{quote}

\section{Introduction}

A stationary random graph is a random rooted graph whose distribution is invariant under re-rooting by a simple random walk.  This notion was made explicit by Benjamini and Curien in \cite{benjamini-curien2012} motivated by several examples, including the Uniform Infinite Planar Triangulation/Quadrangulation (UIPT/Q), and previously defined notions such as unimodular random graphs.

In said work they develop the basic entropy theory for stationary random graphs, analogous to the well known theory for random walks on finitely generated groups, see \cite{kaimanovich-vershik1983}. In particular, they define an entropy and prove that if it is zero then the random graph almost surely satisfies the Liouville property (i.e. bounded harmonic functions are constant).  The converse implication, that positive entropy implies the existence of non-constant bounded harmonic functions, was posed as a question, see \cite[Remark 3.7]{benjamini-curien2012}.

In this work we answer this question in the afirmative under an additional condition on the stationary random graph.  The hypothesis is the following (see Lemma \ref{speedlemma}):

\MyQuote{\label{condition}The expectation of the number of elements of the ball of radius $n$ has finite exponential growth.}

\noindent Our main result is the following (see Theorem \ref{entropytheorem}).
\begin{maintheorem}\label{maintheorem}
An ergodic stationary random graph satisfying condition (\ref{condition}) above, has zero entropy if and only if it satisfies the Liouville property almost surely.  Furthermore, if such a graph has positive entropy, then almost surely it admits an infinite dimensional space of bounded harmonic functions.
\end{maintheorem}

Recent work of Benjamini, Paquette, and Pfeffer implies that the space of bounded harmonic functions on a stationary random graph must be either infinite or one dimensional (see \cite{benjamini-paquette-pfeffer2014}).  This yields an alternate proof of the second part of the above theorem (using the first part).

From the direct implication, which was already proved in their paper, Benjamini and Curien proved that the Uniform Infinite Planar Quadrangulation almost surely satisfies the Liouville property.  With the extension given by our result above, it is possible to deduce that certain stationary random graphs admit many bounded harmonic functions.  We will discuss in Section \ref{applications} a few such examples, like the $\kappa$-Markovian infinite planar triangulations, introduced recently by Curien in \cite{curien2014}, and the Hyperbolic Poisson-Delaunay graph.

An important difficulty in applying the above theorem is the lack of general criteria for establishing that a stationary random graph has at most exponential volume growth, even when the distribution of the degree of the root is known to be well behaved.  In most cases where the growth of a stationary graph is known (e.g. the Uniform Infinite Planar Quadrangulation, or the Hyperbolic Poisson-Delaunay graph) it seems to have been established by ad-hoc, and some times very intricate, arguments.  Thus, the authors consider the following question to be important:
\begin{question}
Given a stationary random graph such that the degree of the root is well behaved.  Under what conditions can one deduce that the graph has at most exponential volume growth?
\end{question}

To the best of the author's knowledge there is no widely applicable answer to the above question available in the literature.   We discuss two relevant partial results in Section \ref{canopysection}.   First, we give an example, due to Asaf Nachmias, of a stationary random graph with super-exponential growth such that the degree of the root has finite mean, and in fact is comparable to a Poisson variable.  The example also has the special property that the degree of the root determines the entire graph up to rooted isomorphism.   Second, we prove that for unimodular graphs whose root has finite expectation, if the number of elements at distance $n$ from the root is asymptotically independent from the degree of the root, then the graph has at most exponential growth (see Lemma \ref{growthlemma}).

Our proof of the Main Theorem involves Derriennic's zero-two law, a sharp criterion for equivalence of the tail and invariant events of a Markov chain (see Corollary \ref{zerotwocorollary}), and a ``looping'' argument which allows us to avoid parity problems (see Figure \ref{figuralazos}).  In order to show that our results are valid for graphs with unbounded degree, we improve the inequalities between the linear drift and entropy from \cite[Proposition 3.6]{benjamini-curien2012} with essentially the same proof (see Lemma \ref{speedlemma}).  To show that positive entropy implies that the space of bounded harmonic functions is infinite dimensional, we relate the dimension of this space to the mutual information between the first $m$ steps of a random walk and its tail behavior (see Lemma \ref{informationandharmoniclemma}).

This occupies the first few sections of the paper.  In section \ref{applications} we discuss examples and applications of the main theorem to several examples, many of which were already known.


\section{Tail and invariant events}

In this section we introduce the terminology and notation to be used in the rest of the article. Throughout this article we use the word ``graph'' as a synonym for connected locally finite (i.e. each vertex has finite degree) undirected graph.  If $X$ is a graph, we denote by $V(X)$ the set of vertices of $X$ and by $E(X)$ the set of edges. We allow multiple edges and loops.

Consider for any graph $X$, the path space $\Omega$ whose elements are sequences $\omega = (x_0,x_1,\ldots)$ of vertices with the property that $x_n$ is a neighbor of $x_{n+1}$ for all $n\geq 0$.  The space $\Omega$ when endowed with the topology of coordinate-wise convergence is a Polish space.  We define the one step transition probability $p(x,y)$ between two vertices $x,y \in V(X)$ by 
\[p(x,y) = \frac{\text{number of edges connecting }x\text{ to }y}{\degree(x)},\]
where edges connecting $x$ to $x$ are only counted once in the denominator. The $n$-th step transition probability $p^n(x,y)$ is defined by
\[p^n(x,y) = \sum\limits_{x_1,\ldots,x_{n-1}}p(x,x_1)p(x_1,x_2)\cdots p(x_{n-1},y).\]

\noindent For each $x \in V(X)$, the distribution of the simple random walk starting at $x$ is the unique Borel probability $\P_x$ on $\Omega$ which satisfies
\[\P_x(x_0 = x,x_1 = a_1,\ldots,x_n = a_n) = p(x,a_1)p(a_1,a_2)\cdots p(a_{n-1},a_n)\]
for all sequences $a_1,\ldots,a_n \in V(X)$. A simple random walk on $X$ is a $V(X)$ valued random process $x_n$, indexed on $n = 0,1,\ldots$, whose distribution is of the form
\[\sum\limits_{x \in V(X)} \mu(x)\P_x,\]
where the initial distribution of the walk $\mu$ is a probability on $X$.

For each $n$, let $\F^n$ be the $\sigma$-algebra on $\Omega$ generated by $x_n,x_{n+1},\ldots$.  The tail $\sigma$ algebra $\F^\infty$ is defined by
\[\F^\infty = \bigcap\limits_{n} \F^n,\]
while the invariant $\sigma$-algebra is defined by
\[\F^\inv = \lbrace A \in \F^\infty:\text{ if }\omega = (x_0,x_1,\ldots) \in A\text{ then }\omega' = (x_1,x_2,\ldots) \in A\rbrace.\]

\noindent Suppose $x_n$ is a simple random walk on $X$ whose distribution we denote by $\P$.  We say that the tail and invariant $\sigma$-algebras are equivalent with respect to $x_n$ if for each $A \in \F^\infty$, there exists $B \in \F^\inv$ such that $\P(A \triangle B) = 0$, where $\triangle$ denotes symmetric difference.

\begin{remark}
Consider a graph consisting of a single edge which joins two distinct vertices $x$ and $y$. This is a simple example where $\F^\infty$ and $\F^\inv$ are not equivalent. All invariant events are trivial.  However, the tail event of being at $x$ for all large enough even times is not invariant and has intermediate probability if the initial distribution gives positive mass to both vertices.
\end{remark}


\section{The zero-two law}

In this section we discuss the criterion for equivalence of tail and invariant events proved by Derriennic. For this purpose, define for each vertex $x$ in a graph $X$, the quantities
\[\alpha_n(X,x) = \sum\limits_{y \in V(X)}|p^{n+1}(x,y) - p^n(x,y)|,\]
and let 
\[\alpha_\infty(X,x) = \lim\limits_{n \to +\infty}\alpha_n(X,x).\]

\noindent We restate \cite[Théorème 3]{derriennic1976} in our context.
\begin{theorem}[Derriennic]\label{derriennicstheorem}
Let $X$ be a graph.  For each $x \in V(X)$, the limit $\alpha_\infty(X,x)$ exists and one has
\[\sup\limits_{x \in X}\alpha_\infty(X,x) = 0\text{ or }2.\]
Furthermore, the above supremum is $0$ if and only if $\F^\infty$ and $\F^\inv$ are equivalent for all simple random walks on $X$.
\end{theorem}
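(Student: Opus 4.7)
My plan is to address the three claims of the theorem in turn. For existence of $\alpha_\infty(X,x)$, writing $\mu_n = p^n(x,\cdot)$ as a probability measure on $V(X)$ and letting the transition operator act on signed measures by $(\nu P)(y) = \sum_z \nu(z) p(z,y)$, we have $\mu_{n+1} - \mu_n = (\mu_n - \mu_{n-1})P$. A direct triangle-inequality computation gives $\|\nu P\|_1 \leq \|\nu\|_1$, so $\alpha_n(X,x) = \|\mu_{n+1} - \mu_n\|_1$ is non-increasing in $n$, and the limit exists in $[0,2]$.

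For the zero-two dichotomy, I would use a Hahn-Jordan decomposition $\mu_{n+1} - \mu_n = \rho_n^+ - \rho_n^-$ with disjoint supports and common mass $\alpha_n(X,x)/2$. Normalizing to probability measures $\nu_n^\pm = 2\rho_n^\pm/\alpha_n$, one computes $\|\nu_n^+ P^k - \nu_n^- P^k\|_1 = 2\alpha_{n+k}/\alpha_n$. If $\alpha_\infty(X,x_0)>0$ for some $x_0$, taking $n$ large (so $\alpha_n$ is close to $\alpha_\infty$) and then $k$ large pushes this quantity arbitrarily close to $2$, meaning the two ``phases'' $\nu_n^\pm P^k$ become asymptotically mutually singular. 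Combining this with the Markov property, I would argue that there exist vertices $y$ in the common support of these iterates where $\alpha_\infty(X,y)$ is arbitrarily close to $2$, forcing the supremum to equal $2$.

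For equivalence of $\F^\infty$ and $\F^\inv$, let $A \in \F^\infty$ be a tail event. The Markov property provides functions $h_n : V(X) \to [0,1]$ such that $\mathbf{1}_A = h_n(x_n)$ almost surely, with $h_n = P^k h_{n+k}$ for all $k \geq 0$. Then
\[|h_n(y) - h_{n+1}(y)| = \Big|\sum_z (p^k(y,z) - p^{k-1}(y,z))\, h_{n+k}(z)\Big| \leq \alpha_{k-1}(X,y)\cdot \|h_{n+k}\|_\infty,\]
and letting $k \to \infty$ gives $|h_n(y) - h_{n+1}(y)| \leq \alpha_\infty(X,y)$. Hence the condition $\sup_y \alpha_\infty(X,y) = 0$ forces $h_n$ to be independent of $n$; the resulting common function $h$ satisfies $h(x_n) = \mathbf{1}_A = h(x_{n+1})$ almost surely, exhibiting $A$ as equivalent to an invariant event. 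Conversely, when $\alpha_\infty(X,x) = 2$ at some vertex, one builds a non-invariant tail event from the Hahn-Jordan indicators along the walk, generalizing the two-vertex bipartite example from the remark in Section~2.

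The step I expect to be the main obstacle is the zero-two dichotomy itself: passing from asymptotic mutual singularity of the two Hahn-Jordan phases to pointwise control of $\alpha_\infty(X,y)$ at concrete vertices $y$ is the delicate technical point, requiring careful bookkeeping of how the ``phase distinction'' is distributed across the supports of the iterated measures and ruling out that $\alpha_\infty$ can stabilize at an intermediate value.
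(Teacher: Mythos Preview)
The paper does not prove this theorem: it is quoted as \cite[Th\'eor\`eme 3]{derriennic1976} and used as a black box, so there is no in-paper argument to compare against. Your sketch is broadly in the spirit of Derriennic's original proof (monotonicity of $\alpha_n$ via the $\ell^1$-contraction of $P$, and a Hahn--Jordan ``two-phase'' argument for the dichotomy), and you correctly flag that the passage from near-singularity of $\nu_n^\pm P^k$ to $\sup_y\alpha_\infty(X,y)=2$ is the genuine technical point; in Derriennic's treatment this is handled by first showing that $\alpha_\infty(X,x)$ is independent of $x$ in the irreducible case, which sidesteps the need to locate specific vertices.

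One concrete inaccuracy worth fixing: for a tail event $A$ it is not true that $\mathbf{1}_A = h_n(x_n)$ almost surely. What the Markov property gives is $\E[\mathbf{1}_A\mid \F_n]=h_n(x_n)$ with $h_n(y)=\P_y(\theta^{-n}A)$; the indicator itself depends on the whole future, not just $x_n$. Your computation $|h_n(y)-h_{n+1}(y)|\le\alpha_\infty(X,y)$ is still valid for these functions, so under $\sup_y\alpha_\infty=0$ one obtains a single bounded harmonic $h$ with $h_n=h$ for all $n$. One then concludes via martingale convergence that $h(x_n)\to\mathbf{1}_A$, and since $h$ is harmonic the limit is shift-invariant; this supplies the invariant event equivalent to $A$. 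With that correction the forward implication goes through, and the converse construction you indicate (building a non-invariant tail event from the Hahn--Jordan phase) is indeed how the other direction is done.
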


\noindent We will need the following consequence of Derrienic's result.

\begin{corollary}\label{zerotwocorollary}
If $X$ is a graph such that $p(x,x) \ge 1/2$ for all $x \in V(X)$, then $\F^\infty$ and $\F^\inv$ are equivalent for every simple random walk on $X$.
\end{corollary}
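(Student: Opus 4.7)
The plan is to invoke Derriennic's dichotomy from Theorem \ref{derriennicstheorem}: since $\sup_{x} \alpha_\infty(X,x)$ is forced to be either $0$ or $2$, it suffices to exhibit any bound strictly smaller than $2$. I aim to prove the uniform estimate $\alpha_n(X,x) \leq 1$ for every $x$ and $n$, which then passes to the limit. Intuitively, the laziness hypothesis $p(x,x) \geq 1/2$ rules out exactly the parity obstruction illustrated in the two-vertex remark following the definition of $\F^\inv$, and the argument below will make this quantitative.

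The main device is the decomposition of the transition kernel that laziness makes available: defining $q(x,y) := 2p(x,y)$ for $y \neq x$ and $q(x,x) := 2p(x,x) - 1 \geq 0$ yields a stochastic matrix $q$ on $V(X)$ with $p = \tfrac{1}{2}I + \tfrac{1}{2}q$. Combining this with the identity $\sum_{z} p(x,z) = 1$ to symmetrize around the base point, I would rewrite
$$p^{n+1}(x,y) - p^n(x,y) = \sum_{z} p(x,z)\bigl(p^n(z,y) - p^n(x,y)\bigr) = \tfrac{1}{2}\sum_{z} q(x,z)\bigl(p^n(z,y) - p^n(x,y)\bigr).$$
Taking absolute values, summing over $y$, interchanging the order of summation, and using that $p^n(z,\cdot)$ and $p^n(x,\cdot)$ are probability measures (so that $\sum_y |p^n(z,y) - p^n(x,y)| \leq 2$) produces the desired bound $\alpha_n(X,x) \leq 1$.

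Passing to the limit yields $\alpha_\infty(X,x) \leq 1 < 2$ for every vertex $x$, so Derriennic's dichotomy forces $\sup_{x} \alpha_\infty(X,x) = 0$, and the second statement of Theorem \ref{derriennicstheorem} then gives equivalence of $\F^\infty$ and $\F^\inv$ for every simple random walk on $X$. I do not anticipate a real obstacle here; the only subtlety worth flagging is that one needs only a strict-inequality bound on $\alpha_\infty$ rather than a sharp estimate, since the zero-two dichotomy does the rest of the work.
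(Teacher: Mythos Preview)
Your argument is correct. Both you and the paper invoke Derriennic's dichotomy after producing a uniform bound $\alpha_n(X,x) < 2$, but the ways you reach that bound differ. The paper first applies Chapman--Kolmogorov to reduce $\alpha_n$ to a $p^{n-1}$-average of $\alpha_1(X,z)$, and then bounds $\alpha_1(X,z) \le 2 - 1/4$ by noting that $p(z,z) \ge 1/2$ and $p^2(z,z) \ge 1/4$ force a nontrivial overlap between the two measures at the diagonal. Your route instead exploits the laziness hypothesis structurally, writing $p = \tfrac{1}{2}I + \tfrac{1}{2}q$ with $q$ stochastic; the identity part kills itself in the difference $p^{n+1}-p^n$, and the surviving $\tfrac{1}{2}$ in front of a convex combination of total-variation distances gives the sharper uniform bound $\alpha_n(X,x) \le 1$. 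Your decomposition is slightly cleaner and yields a better constant, while the paper's argument is perhaps more robust in that it only needs a lower bound on the diagonal of $p$ and $p^2$ rather than the full lazy splitting; for the purpose at hand either suffices, since the zero--two law only requires any bound strictly below $2$.
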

\begin{proof}
For each $x \in V(X)$, we calculate
\begin{align*}
\alpha_n(X,x) &= \sum\limits_{y}|p^{n+1}(x,y) - p^n(x,y)| = \sum\limits_{y}|\sum\limits_{z}p^{n-1}(x,z)(p^2(z,y) - p(z,y))|
\\ &\le \sum\limits_{z}p^{n-1}(x,z)\sum\limits_{y}|p^2(z,y) - p(z,y)| = \sum\limits_{z}p^{n-1}(x,z)\alpha_1(X,z).
\end{align*}

\noindent On the other hand one has $p^2(z,z) \ge 1/4$ and $p(z,z) \ge 1/2$, so in particular
\[\alpha_1(X,z) \le 2-1/4,\]
for all $z \in Z$.

\medskip
\noindent This implies $\alpha_\infty(X,x) \le 2 - 1/4$ for all $x \in V(X)$, so by Theorem \ref{derriennicstheorem}, the tail and invariant $\sigma$-algebras are equivalent for all simple random walks on $X$, as claimed.
\end{proof}

\section{Mutual information}

The mutual information between two random variables is a non-negative (possibly infinite) number which quantifies the dependence relationship between them. In particular, the mutual information is zero if and only if the variables are independent, and is maximized when both variables coincide.  

In this section we consider the mutual information between the first $m$ steps of a simple random walk and all steps after time $n$, as well as the mutual information between the first $m$ steps and the tail behavior of the simple random walk on a graph $X$.  We review the basic properties relating these quantities to the space of bounded harmonic functions on the graph (see in particular \cite{blackwell1955},\cite{derriennic1985}, and \cite{kaimanovich1992}).  This will be useful later in our study of entropy of stationary random graphs.

Fix a graph $X$, a root vertex $x \in V(X)$, and recall that $\Omega$ denotes the space of paths $(x_0,x_1,\ldots)$ in $X$.   Denote by $\F_n$ the $\sigma$-algebra generated by $(x_0,\ldots,x_n)$ for each $n$.

Let $\widehat{\P}_x$ be the distribution of two identical copies of a simple random walk starting at $x$ in $X$, while $\P_x \times \P_x$ denotes the distribution of two independent random walks starting at $x$. Note that both probabilities are defined on $\Omega \times \Omega$ but the former is supported on the diagonal, while the later is not (save trivial examples).

Let $\varphi$ be the convex function given by $\varphi(t) = t\log(t)$.  For $m \l n \le \infty$, the mutual information between $\F_m$ and $\F^n$ is defined by
\[I_m^n(X,x) = \sup\left\lbrace \sum\limits_{i}\varphi\left(\frac{\widehat{\P}_x(A_i)}{(\P_x \times \P_x)(A_i)}\right)(\P_x \times \P_x)(A_i) \right\rbrace,\]
where the supremum is over all finite partitions of $\Omega\times \Omega$ whose sets $A_i$ belong to $\sigma(\F_m \times \F^n)$. It follows from the convexitiy of $\varphi$, that $I_m^n(X,x)$ is always defined and non-negative, and equals zero if and only if $\widehat{\P}_x$ and $\P_x\times \P_x$ coincide on $\sigma(\F_m \times \F^n)$.

Recall that a function $f:V(X) \to \R$ is said to be harmonic if 
\[f(y) = \sum\limits_{z \in V(X)} p(y,z)f(z)\]
for all $y \in V(X)$.  A graph is said to satisfy the Liouville property if and only if all its bounded harmonic functions are constant.  The following result shows that, under mild hypothesis, the mutual information $I_m^\infty(X,x)$ is directly related to the dimension of the space of bounded harmonic functions on the graph $X$.

\begin{lemma}\label{informationandharmoniclemma}
Let $(X,x)$ be a rooted graph such that $\F^\inv$ and $\F^\infty$ are equivalent for the simple random walk starting at $x$.  Then $X$ satisfies the Liouville property if and only if $I_m^\infty(X,x) = 0$ for all $m$.  Furthermore, if the space of bounded harmonic functions on $X$ is finite dimensional and of dimension $d$, then $I_m^\infty(X,x) \le \log(d)$ for all $m$.
\end{lemma}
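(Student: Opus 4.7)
The plan is to reduce both assertions to statements about the tail $\sigma$-algebra $\F^\infty$ under $\P_x$ via the classical martingale identification of bounded harmonic functions with $L^\infty(\F^\inv,\P_x)$, and to recognize $I_m^\infty$ as the Kullback--Leibler mutual information between $\F_m$ and $\F^\infty$.

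First I would establish the isomorphism between bounded harmonic functions on $X$ and $L^\infty(\F^\inv,\P_x)$. For a bounded harmonic $f$, the process $f(x_n)$ is a bounded $\P_x$-martingale converging $\P_x$-a.s.\ to a limit $Z_f$ which is $\F^\inv$-measurable (its value is invariant under the time shift). Conversely, given $Z\in L^\infty(\F^\inv,\P_x)$, the Markov property combined with $Z\circ\shift=Z$ shows that $f_Z(y):=\E_y[Z]$ is bounded and harmonic; the Markov property also gives $\E_x[Z\mid\F_n]=f_Z(x_n)$, so L\'evy's martingale convergence yields $Z_{f_Z}=Z$ $\P_x$-almost surely. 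These two maps are mutually inverse, and by the equivalence hypothesis $L^\infty(\F^\inv,\P_x)=L^\infty(\F^\infty,\P_x)$, so the bounded harmonics on $X$ are identified with $L^\infty(\F^\infty,\P_x)$.

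Next, inspection of the definition shows that $I_m^\infty(X,x)$ is exactly the Kullback--Leibler divergence between the joint law of $(\omega|_{\F_m},\omega|_{\F^\infty})$ under $\P_x$ and the product of its marginals, i.e., the mutual information between $\F_m$ and $\F^\infty$. In particular, $I_m^\infty(X,x)=0$ iff $\F_m$ and $\F^\infty$ are $\P_x$-independent. Combining with the previous step: $X$ is Liouville iff $L^\infty(\F^\infty,\P_x)$ is one-dimensional iff $\F^\infty$ is $\P_x$-trivial. Triviality clearly yields $I_m^\infty=0$ for every $m$; conversely, if $\F^\infty$ is independent of every $\F_m$, a monotone class argument shows $\F^\infty$ is independent of $\sigma(\bigcup_m\F_m)$, which contains $\F^\infty$, forcing $\P_x(B)=\P_x(B)^2$ for every $B\in\F^\infty$ and hence triviality.

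For the quantitative bound, the hypothesis that the space of bounded harmonics has dimension $d$ translates, via the isomorphism, into $\F^\infty$ having exactly $d$ atoms $B_1,\ldots,B_d$ modulo $\P_x$-null sets. Letting $U$ denote $(x_0,\ldots,x_m)$ and $V$ the $\{1,\ldots,d\}$-valued index of the atom containing $\omega$, the identity $I_m^\infty(X,x)=I(U;V)$ together with the classical bound $I(U;V)\le H(V)\le\log d$ gives the required inequality. The main technical point I foresee is justifying that the sup in the definition of $I_m^\infty$ over arbitrary finite partitions of $\Omega\times\Omega$ in $\sigma(\F_m\times\F^\infty)$ agrees with the information-theoretic mutual information $I(U;V)$; this is a standard fact, obtained by approximating a general partition by rectangular partitions $C_j\times B_k$ and using the monotonicity of the $\varphi$-divergence under refinement, which is itself a direct consequence of Jensen's inequality applied to $\varphi$.
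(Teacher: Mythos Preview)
Your proposal is correct and follows essentially the same route as the paper: the paper cites Blackwell's theorem for the identification of bounded harmonic functions with $L^\infty(\F^\inv,\P_x)$ (which you sketch directly via martingale convergence), uses the same independence/approximation argument for the first equivalence, and for the bound $I_m^\infty\le\log d$ invokes Dobrushin's theorem to reduce to rectangular partitions and then writes out explicitly the computation $I(U;V)=H(V)-H(V\mid U)\le H(V)\le\log d$ that you state in information-theoretic shorthand.
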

\begin{proof}
By \cite[Theorem 2]{blackwell1955}, the bounded harmonic functions on $X$ are in bijection with bounded shift invariant measurable functions on the space of paths $\Omega$ considered modulo modifications on $\P_x$-null sets.  Since $\F^\inv$ and $\F^\infty$ are equivalent, this implies that $X$ satisfies the Liouville property if and only if $\F^\infty$ is trivial.

\medskip
\noindent If $\F^\infty$ is trivial, then $\F_m$ is independent from $\F^\infty$ for each $m$, so $I_m^\infty(X,x) = 0$ as claimed.  In the other direction, if $I_m^\infty(X,x) = 0$ for all $m$, then $\F_m$ and $\F^\infty$ are independent. Since one can approximate any tail event by events in $\F_m$ (for $m$ large), we obtain that each tail event is independent from itself. This implies that $\F^\infty$ is trivial as claimed.

\medskip
\noindent Suppose now that the space of bounded harmonic functions on $X$ has dimension $d$.  By Blackwell's result above, there is a partition $B_1,\ldots,B_d$ of $\Omega$ into disjoint tail events which are atoms in $\F^\infty$.  By Dobrushin's Theorem (see \cite[Lemma 7.3]{gray2011}), one may calculate $I_m^\infty$ as the supremum over all partitions of $\sigma(\F_m\times \F^\infty)$ of the form $A_i \times B_j$, where $A_1,\ldots, A_n \in \F_m$.  For any such partition, one has
\begin{align*}
\sum\limits_{i,j}\varphi\left(\frac{\P_x(A_i\cap B_j)}{\P_x(A_i)\P_x(B_j)}\right)&\P_x(A_i)\P_x(B_j)
\\ &= -\sum\limits_{j = 1}^d\P_x(B_j)\log\left(B_j\right) + \sum\limits_{i,j}\varphi\left(\frac{\P_x(A_i\cap B_j)}{\P_x(A_i)}\right)\P_x(A_i)
\\ &\le \log(d) + \sum\limits_{i = 1}^n \log\left(\sum\limits_{j=1}^d \frac{\P_x(A_i\cap B_j)^2}{\P_x(A_i)^2}\right)\P_x(A_i)
\\ &\le \log(d),
\end{align*}
where we use Jensen's inequality and the fact that $\sum_j p_j^2 \l 1$ if $\sum_j p_j = 1$ (in our case $p_j = \P_x(A_i \cap B_j)/\P_x(A_i)$).

\medskip
\noindent By taking supremum, one obtains $I_m^\infty(X,x) \le \log(d)$ as claimed.
\end{proof}

The following Lemma gives a concrete formula for the mutual information $I_m^n(X,x)$ in terms of the transition probabilities of the random walk.  It will be used later on when we consider the asymptotic entropy of random walks on stationary random graphs.

\begin{lemma}\label{informationformulalemma}
Let $(X,x)$ be a rooted graph. Then the following holds:
\begin{enumerate}
 \item For each finite $m \l n$, one has
 $$I_m^n(X,x) = \sum\limits_{y,z \in V(X)}\log\left(\frac{p^{n-m}(y,z)}{p^n(x,z)}\right)p^m(x,y)p^{n-m}(y,z).$$
 \item For each $m$, the function $n \mapsto I_m^n(X,x)$ is non-increasing and converges to $I_m^\infty(X,x)$ when $n \to +\infty$.
\end{enumerate} 
\end{lemma}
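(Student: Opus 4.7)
The plan for part (1) is to reduce the abstract supremum defining $I_m^n(X,x)$ to an explicit sum via the Markov property of the simple random walk. As in the proof of Lemma \ref{informationandharmoniclemma}, I would invoke Dobrushin's theorem to restrict attention to partitions of the form $\{A_i \times B_j\}$ with $A_i \in \F_m$ and $B_j \in \F^n$. The atoms of $\F_m$ are the finite cylinders $\{x_0 = x, x_1 = a_1, \ldots, x_m = a_m\}$, while $\F^n$ is generated by finite cylinders $\{x_n = b_n, \ldots, x_{n+k} = b_{n+k}\}$. On such rectangles a direct computation gives
\[\widehat{\P}_x(A\times B) = p(x,a_1)\cdots p(a_{m-1}, a_m)\,p^{n-m}(a_m, b_n)\,p(b_n,b_{n+1})\cdots p(b_{n+k-1}, b_{n+k}),\]
while $(\P_x \times \P_x)(A\times B)$ differs only in that the factor $p^{n-m}(a_m, b_n)$ is replaced by $p^n(x, b_n)$. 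The density ratio therefore depends only on $(x_m, x_n)$, so by standard approximation by finite partitions the supremum is attained (in the limit over refining cylinder partitions) by the partition generated by that pair, yielding the claimed closed-form expression.

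For part (2), monotonicity is immediate: since $\F^{n+1} \subset \F^n$, we have $\sigma(\F_m \times \F^{n+1}) \subset \sigma(\F_m \times \F^n)$, so the supremum is taken over a smaller family as $n$ grows. For the convergence $I_m^n \to I_m^\infty$, the inequality $I_m^\infty \le \lim_n I_m^n$ is immediate from $\F^\infty \subset \F^n$. For the reverse direction, I would rely on a reverse-martingale convergence argument for relative entropies. By part (1), the Radon-Nikodym densities $d\widehat{\P}_x/d(\P_x \times \P_x)$ restricted to $\sigma(\F_m \times \F^n)$ equal $p^{n-m}(x_m, x_n)/p^n(x, x_n)$, and these form a reverse martingale under $\P_x \times \P_x$ along the decreasing sequence of $\sigma$-algebras whose intersection is $\sigma(\F_m \times \F^\infty)$. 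A Vitali-type convergence theorem for relative entropies (cf. \cite[Chapter 7]{gray2011}) then yields convergence of the divergences to the one on the limiting $\sigma$-algebra.

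The main obstacle will be the convergence statement in part (2). The measures $\widehat{\P}_x$ and $\P_x \times \P_x$ are typically mutually singular on the full product $\sigma$-algebra (the former is concentrated on the diagonal), so one must work on the restricted $\sigma$-algebras where absolute continuity and finite divergence hold, and then pass to the limit carefully. Verifying the hypotheses of the downward martingale convergence theorem for relative entropies (essentially uniform integrability of the $\log$ of the densities against $\widehat{\P}_x$) is the main technical point, but it becomes standard once the explicit form of the densities from part (1) is in hand.
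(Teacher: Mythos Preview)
Your proposal is correct and follows essentially the same route as the paper: Dobrushin's theorem plus the Markov property for part (1), and reverse martingale convergence of the densities $f_n$ for part (2). The one point where the paper is more concrete is the uniform integrability step you flag as ``standard'': they observe that $f_n = \E(f_{m+1}\mid \G_n)$, apply Jensen to get $\varphi(f_n)\le \E(\varphi(f_{m+1})\mid \G_n)$, and then use that this dominating reverse martingale converges in $L^1$---so you may want to record that explicitly rather than defer to a packaged Vitali-type theorem.
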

\begin{proof}
By Dobrushin's Theorem (see \cite[Lemma 7.3]{gray2011}), one may take the supremum in the definition of $I_m^n(X,x)$ over partitions in a generating set of $\sigma(\F_m\times \F^n)$.  The subsets of $\Omega\times \Omega$ consisting of pairs of paths $((x_i),(y_i))$ satisfying $x_0=a_1,\ldots,x_m=a_m, y_n=a_n,\ldots,y_N = a_N$ for fixed $a_i$ and $N \g n$, generate the necessary $\sigma$-algebra, and hence, we may take the supremum over partitions into sets of this form.

\medskip
\noindent For any fixed $N \g n>m$, consider the partition $\lbrace A_{j}\rbrace$ into sets as above, where $a_1,\ldots,a_m$, $a_n,\ldots,a_N$ range over all of $V(X)$. Because of the Markov property one obtains the same result for all $N$ in the following calculation
\begin{align*}
\sum\limits_{j}&\varphi\left(\frac{\widehat{\P}_x(A_j)}{(\P_x \times \P_x)(A_j)}\right)(\P_x \times \P_x)(A_j)
=  \sum_{a_n,a_m} \log\left(\frac{p^{n-m}(a_m,a_n)}{p^n(x,a_n)}\right)p^m(x,a_m)p^{n-m}(a_m,a_n).
\end{align*}
This implies the first claim by taking supremum. Since $I_m^n(X,x)$ is calculated as a supremum over a set of partitions which decreases with $n$, $n \mapsto I_m^n(X,x)$ is non-increasing, and the limit
\[L = \lim\limits_{n \to +\infty}I_m^n(X,x)\]
exists. It is no smaller than $I_m^\infty(X,x)$.  Notice that the formula for $I_m^n(X,x)$ implies that $I_m^{m+1}(X,x)$ is finite, and hence so is $L$.

\medskip
\noindent To simplify notation, fix $m$ and set $\G_m = \sigma(\F_m \times \F^n)$ for each $m \l n \le \infty$.  By the Gelfand-Yaglom-Perez Theorem (\cite[Theorem 2.4.2]{pinsker1964} or \cite[Lemma 7.4]{gray2011}), one has
\[I_m^n(X,x) = \int\limits_{\Omega \times \Omega} \varphi(f_n)\d(\P_x \times \P_x),\]
for all $n$ (including $n = \infty$) where $f_n$ is the Radon-Nikodym derivative of $\widehat{\P}_x$ restricted to $\G_n$ relative to $\P_x \times \P_x$ restricted to the same $\sigma$-algebra.

\medskip
\noindent By the reverse martingale convergence theorem (see \cite[pg. 483]{doob2001}), one has that $f_n \to f_{\infty}$ pointwise when $n \to +\infty$.  Hence, $\varphi(f_n)$ converges to $\varphi(f_{\infty})$ and it suffices to show that these functions are uniformly integrable to establish that $\lim I_m^n(X,x) = I_m^\infty(X,x)$.

\medskip
\noindent Since $f_n = \E(f_m | \G_n)$, and the conditional expectation is relative to $\P_x\times \P_x$, one obtains by Jensen's inequality that
\[e^{-1} \le \varphi(f_n) =  \varphi(\E(f_m|\G_n)) \le \E(\varphi(f_m)|\G_n)),\]
for all finite $n$. By the reverse martingale convergence theorem, the right-hand side converges in $L^1$ to $\E(\varphi(f_m)|\G_\infty))$, and therefore, the family $\varphi(f_n)$ is uniformly integrable as claimed.  It follows that $\lim\limits_{n \to +\infty}I_m^n(X,x) = I_m^\infty(X,x)$ which concludes the proof.
\end{proof}

\section{Linear drift and entropy of random graphs}

Consider the topological space whose points represent all isomorphism classes of rooted graphs.  A sequence of rooted graphs in this space converges if and only if the isomorphism type of the ball of each fixed radius around the root is eventually constant.  The resulting space is separable and its topology comes from a complete metric.

Furthermore, one can construct a larger space consisting of rooted graphs with a path starting at the root.  Given a random graph $(X,x)$, one can find a random element of the space of graphs with paths $(X,x,(x_0,x_1,\ldots))$ such that the conditional distribution of $(x_0,x_1,\ldots)$ given $(X,x)$ is that of a simple random walk on $(X,x)$ starting at $x$.  We call $(X,x,(x_0,x_1,\ldots))$ a simple random walk on $(X,x)$. Sometimes we omit $(X,x)$ and just write $x_n$.

A random graph $(X,x)$ is called stationary if it has the same distribution as $(X,x_1)$ where $x_n$ is a simple random walk on $(X,x)$.  A stationary random graph is ergodic if the distribution of the simple random walk on it is an ergodic invariant measure for the shift transformation
$$(X,x,(x_0,x_1,x_2,\ldots)) \mapsto (X,x_1,(x_1,x_2,\ldots)).$$

\noindent Let $x_n$ be the simple random walk on an ergodic stationary random graph $(X,x)$.  By Kingman's subadditive ergodic theorem, the limit
\[\ell(X,x) = \lim\limits_{n \to +\infty}\frac{d(x_0,x_n)}{n}\]
exists almost surely and in mean. Here $d(x_0,x_n)$ denotes the graph distance on the graph $(X,x)$ between $x_0$ and $x_n$.  We call $\ell(X,x)$ the linear drift of the simple random walk on $(X,x)$.  One obtains trivially that $0 \le \ell(X,x) \le 1$.

Another important quantity associated to the random walk $x_n$ is its entropy.  It is defined as the limit
\[h(X,x) = \lim\limits_{n \to +\infty}-\log(p^n(x_0,x_n))\]
which exists almost surely and in $L^1$ (again by Kingman's theorem) under the condition 
$$-\E(\log(p(x_0,x_1)) \le \E(\log(\degree(x))) \l +\infty.$$

\noindent Under a mild assumption on the growth of the random graph one can conclude that $h(X,x) = 0$ if and only if $\ell(X,x) = 0$.  The following proof is almost the same as that of \cite[Proposition 3.6]{benjamini-curien2012}, which itself follows closely preceding results, see the references preceding \cite[Theorem 13.31]{lyons-peres2014}. In the following statement, $|B_r(x)|$ denotes the number of elements in the set of vertices at distance $r$ or less from $x$.

\begin{lemma}\label{speedlemma}
Let $(X,x)$ be an ergodic stationary random graph satisfying the following assumption
\begin{equation}\label{assumption1}
v(X,x) = \liminf\limits_{r \to +\infty}r^{-1}\E\left(\log\left|B_r(x)\right|\right) \l +\infty.
\end{equation}
Then $h(X,x)$ is finite and satisfies the following inequalities
\[\frac{1}{2}\ell(X,x)^2 \le h(X,x) \le \ell(X,x)v(X,x).\]
\end{lemma}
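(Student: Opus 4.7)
The statement has two halves: the Varopoulos--Carne type lower bound $\tfrac12 \ell^2 \le h$, and the Guivarc'h type upper bound $h \le \ell v$, the latter of which also yields the finiteness of $h$.

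\medskip
\noindent\textbf{Lower bound.}  The plan is to invoke the Varopoulos--Carne inequality, valid for every reversible random walk on a graph,
\[
p^n(x_0,x_n) \;\le\; 2\sqrt{\frac{\degree(x_n)}{\degree(x_0)}}\,\exp\!\left(-\frac{d(x_0,x_n)^2}{2n}\right).
\]
Taking $-\tfrac1n\log$ of both sides gives
\[
-\frac1n\log p^n(x_0,x_n) \;\ge\; \frac12\left(\frac{d(x_0,x_n)}{n}\right)^{\!2} - \frac{\log 2}{n} - \frac{\log\degree(x_n)-\log\degree(x_0)}{2n}.
\]
The left-hand side tends almost surely to $h(X,x)$, while $d(x_0,x_n)/n \to \ell(X,x)$ almost surely by Kingman's theorem. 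Under the finite-log-degree hypothesis $\E(\log\degree(x))<+\infty$, stationarity combined with the Borel--Cantelli lemma implies $\log\degree(x_n)/n \to 0$ almost surely, and likewise the $\log\degree(x_0)/n$ term vanishes. Passing to the almost sure limit produces the lower bound.

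\medskip
\noindent\textbf{Upper bound.}  Here the idea is that the walker at time $n$ is essentially confined to $B_{\ell n}(x_0)$, whose expected log-size grows like $v\cdot \ell n$.  More precisely, the plan is to compare the a.s.\ limit $h=\lim -\tfrac1n\log p^n(x_0,x_n)$ (equivalently, the limit of the conditional Shannon entropies $H(x_n\mid X,x)/n$, by the Shannon--McMillan--Breiman argument used in the stationary setting) with the growth exponent of balls.  One uses the chain rule
\[
H(x_n \mid X,x) \;\le\; H\!\bigl(d(x_0,x_n)\bigr) + \E\!\bigl[\log|B_{d(x_0,x_n)}(x_0)| \,\big|\, X,x\bigr],
\]
noting that $d(x_0,x_n)\in\{0,\dots,n\}$, so the first term contributes $O(\log n)$.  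Dividing by $n$, taking expectation in $(X,x)$, and using $d(x_0,x_n)/n \to \ell$ in $L^1$, one wants to bring the limit inside $\E[\log|B_{\cdot}(x_0)|]$ and invoke the growth hypothesis \eqref{assumption1} to control the limsup by $\ell v$.  This simultaneously shows that $h$ is finite.

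\medskip
\noindent\textbf{Main obstacle.}  The delicate step is the upper bound, and specifically the interchange of limit and expectation: the hypothesis \eqref{assumption1} provides only a \emph{liminf} control of $r^{-1}\E\log|B_r(x)|$, whereas one needs an estimate at the (random) radius $d(x_0,x_n)$.  I would overcome this by extracting from the liminf hypothesis a deterministic subsequence $r_k\to +\infty$ along which $\E\log|B_{r_k}(x)| \le (v+\varepsilon)r_k$, coupling it with the concentration of $d(x_0,x_n)/n$ around $\ell$ to compare $\E\log|B_{d(x_0,x_n)}(x_0)|$ with $\E\log|B_{r_k}(x_0)|$ for a suitably chosen $r_k \approx \ell n$, and exploiting stationarity to shift the root when comparing balls centered at $x_0$ versus $x$.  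The remaining estimates are routine manipulations of conditional entropies and the Markov property, as in the proof of \cite[Proposition 3.6]{benjamini-curien2012}.
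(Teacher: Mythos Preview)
Your proposal is correct and follows essentially the same route as the paper. The lower bound is identical: both invoke the Varopoulos--Carne inequality and dispose of the degree terms using $\E(\log\degree(x))<\infty$ (the paper appeals to Birkhoff's ergodic theorem rather than Borel--Cantelli, but either argument gives $\log\degree(x_n)/n\to 0$ a.s.).

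For the upper bound the ideas coincide but the paper's execution is slightly more direct. Instead of conditioning on the exact value of $d(x_0,x_n)$ via the chain rule, the paper uses only the coarse two-set partition according to the threshold $d(x_0,x_r)\le(\ell+\epsilon)r$, obtaining
\[
-\sum_y p^r(x,y)\log p^r(x,y) \le (1-p_r)\log|B_{(\ell+\epsilon)r}(x)| + p_r\log|B_r(x)|,
\]
with $p_r = \P(d(x_0,x_r)\ge(\ell+\epsilon)r)\to 0$. This replaces your random radius $d(x_0,x_n)$ by the deterministic radius $(\ell+\epsilon)r$ in the dominant term, so after taking expectation and dividing by $r$ one can pass to the $\liminf$ in assumption~\eqref{assumption1} directly, without the separate ``subsequence coupling'' step you outline. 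Your finer chain-rule decomposition would also work, and the obstacle you identify (only a $\liminf$ is assumed) is the genuine one in both arguments; the threshold trick simply absorbs the bookkeeping you describe into a single line.
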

\begin{proof}
By the Carne-Varopoulos inequalities, one obtains
\[p^r(x_0,x_r) \le 2\left(\frac{\degree(x_r)}{\degree(x_0)}\right)^{\frac{1}{2}}e^{-\frac{d(x_0,x_r)^2}{2r}},\ \text{ for all }r.\]
The lower bound follows by taking $-\log$ and limit. Notice that $\E(\log \degree(x))<+\infty$ by assumption (\ref{assumption1}), and therefore $\degree(x_r)/r \to 0$ almost surely by Birkhoff's theorem.

For the upper bound, we use the observation that the function 
$$(p_1,\ldots,p_r) \mapsto \sum p_i\log(1/p_i)$$ 
is maximized over all $p_1,\ldots, p_r \ge 0$, with $p_1+\cdots+p_r = 1$, when all the $p_i$ are equal to $1/r$.  This yields (see also the proof of \cite[Proposition 3.6]{benjamini-curien2012}) that
\[-\sum\limits_{y} p^r(x,y)\log(p^r(x,y)) \le (1-p_r)\log\left|B_{(\ell+\epsilon)r}(x)\right| + p_r\log\left|B_r(x)\right|,\]
for all $\epsilon \g 0$. Here $\ell = \ell(X,x)$, and $p_r = \P(d(x_0,x_r) \ge (\ell + \epsilon)r)$ goes to $0$ when $r \to +\infty$.  Dividing by $r$, taking expectation, inferior limit (at this point we use assumption (\ref{assumption1})), and then letting $\epsilon$ go to zero, yields the desired upper bound for $h(X,x)$.
\end{proof}

\section{Bounded harmonic functions and entropy of random graphs}

It was shown in \cite[Theorem 3.2]{benjamini-curien2012} that $h(X,x) = 0$ if and only if almost surely $\F^\infty$ is trivial for the simple random walk starting at the root of $(X,x)$.  It follows that if $h(X,x) = 0$, then $X$ almost surely satisfies the Liouville property.  The question of whether the converse always holds was posed in the same paper, see \cite[Remark 3.7]{benjamini-curien2012}.  We will settle this question under mild additional hypothesis.

We will show that if $(X,x)$ is an ergodic stationary random graph with positive entropy, then almost surely the space of bounded harmonic functions on $X$ is infinite dimensional.  In particular, the graph obtained by taking the disjoint union of two copies of Cayley graph of $\Z^3$ and adding an edge joining them cannot occur with positive probability for any stationary random graph since its space of bounded harmonic functions has dimension 2.  Also, any graph with transitive isomorphism group must either satisfy the Liouville property or have an infinite dimensional space of bounded harmonic functions.

To begin we express the entropy $h(X,x)$ of a stationary random graph as the average mutual information between the first step and the tail of the corresponding simple random walk.
\begin{lemma}\label{entropyformula}
Let $(X,x)$ be an ergodic stationary random graph with finite entropy.  Then for each $m$, one has
\[h(X,x) = \E\left(\frac{1}{m}I_m^\infty(X,x)\right).\]
\end{lemma}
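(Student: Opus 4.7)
The plan is to use the explicit formula for $I_m^n(X,x)$ from Lemma \ref{informationformulalemma}(1) to rewrite $\E[I_m^n(X,x)]$ as a difference of entropy-like quantities via stationarity, and then pass to the limit $n\to+\infty$. Set $H_n := -\E[\log p^n(x_0,x_n)]$. The entropy hypothesis entails $H_1 < +\infty$, and from the pathwise bound $p^n(x_0,x_n) \ge \prod_{i=0}^{n-1} p(x_i,x_{i+1})$ together with stationarity one obtains $H_n \le n H_1 < +\infty$ for every $n$. Kingman's theorem then gives $H_n/n \to h(X,x)$, and ergodicity makes $h(X,x)$ an almost-sure constant.

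Starting from
\[I_m^n(X,x) = \sum_{y,z} p^m(x,y)\,p^{n-m}(y,z)\,\log\frac{p^{n-m}(y,z)}{p^n(x,z)},\]
I would split the logarithm. Using $\sum_y p^m(x,y)p^{n-m}(y,z)=p^n(x,z)$, the piece carrying $\log p^n(x,z)$ equals $\E[\log p^n(x_0,x_n)\mid (X,x)]$, while the other piece is $\E[\log p^{n-m}(x_m,x_n)\mid (X,x)]$. Taking total expectation and invoking stationarity (so that the law of $(X,x_m,(x_m,x_{m+1},\ldots))$ coincides with that of $(X,x_0,(x_0,x_1,\ldots))$, making $\E[\log p^{n-m}(x_m,x_n)] = \E[\log p^{n-m}(x_0,x_{n-m})]$) yields
\[\E[I_m^n(X,x)] = H_n - H_{n-m}.\]

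To pass to the limit $n\to+\infty$, use that by Lemma \ref{informationformulalemma}(2) the quantity $I_m^n(X,x)$ is non-negative and decreases to $I_m^\infty(X,x)$, and $\E[I_m^{m+1}(X,x)] = H_{m+1}-H_1 < +\infty$. Dominated convergence then gives $\E[I_m^\infty(X,x)] = \lim_n (H_n-H_{n-m}) =: L$. To identify $L$, telescope along multiples of $m$: $H_{qm}=\sum_{k=1}^{q}(H_{km}-H_{(k-1)m})$, with summands non-increasing in $k$ and converging to $L$. Cesaro averaging then yields $H_{qm}/q\to L$, while simultaneously $H_{qm}/q = m\cdot H_{qm}/(qm)\to m\,h(X,x)$, forcing $L=m\,h(X,x)$. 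Dividing by $m$ is the claim.

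The main obstacle is the conversion between the vertex-sum form of $I_m^n$ and conditional expectations along the random walk — essentially a Fubini argument on a countable sum, which must be justified by the finiteness of $H_n$ — together with the integrability check $\E[I_m^{m+1}]<+\infty$ needed to apply dominated convergence. Once these are in place, the stationarity substitution and the Cesaro identification of $L$ go through cleanly.
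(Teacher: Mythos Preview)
Your proposal is correct and follows essentially the same route as the paper: use the explicit formula from Lemma~\ref{informationformulalemma} together with stationarity to obtain $\E[I_m^n(X,x)] = H_n - H_{n-m}$, pass to the limit via the monotonicity of $n\mapsto I_m^n$, and identify the limit by telescoping along $n=km$. The only differences are cosmetic---you spell out the integrability checks and invoke dominated convergence where the paper tacitly relies on monotone convergence---but the structure and key ideas coincide.
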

\begin{proof}
By Lemma \ref{informationformulalemma}, $I_m^n(X,x)$ is non-increasing and converges to $I_m^\infty(X,x)$ when $n \to +\infty$. Hence,
\[\E(I_m^\infty(X,x)) = \lim\limits_{n \to +\infty}\E(I_m^n(X,x)).\]

\noindent Using the formula from Lemma \ref{informationformulalemma} and stationarity, one obtains
\begin{align*}
\E(I_m^n(X,x)) &= \E\left(\sum\limits_{y,z \in X}\log\left(\frac{p^{n-m}(y,z)}{p^n(x,z)}\right)p^m(x,y)p^{n-m}(y,z)\right) 
\\ &= \E\left(\log\left(\frac{p^{n-m}(x_m,x_n)}{p^n(x_0,x_n)}\right)\right)
\\ &= -\E\left(\log\left(p^n(x_0,x_n)\right)\right) + \E\left(\log\left(p^{n-m}(x_m,x_n)\right)\right)
\\ &= -\E\left(\log\left(p^n(x_0,x_n)\right)\right) + \E\left(\log\left(p^{n-m}(x_0,x_{n-m})\right)\right).
\end{align*}

\noindent Letting $H_n = -\E(\log(p^n(x_0,x_n))$, one has obtained that $H_n - H_{n-m}$ converges monotonely.  Since $\frac{1}{n}H_n$ converges to $h(X,x)$, the limit must be $mh(X,x)$ (take the telescoping sum over $n = km$ for $k\in \mathbb{N}$).  This concludes the proof.
\end{proof}

\noindent We can now prove our main theorem.
\begin{theorem}\label{entropytheorem}
Let $(X,x)$ be an ergodic stationary random graph satisfying the assumption (\ref{assumption1}) of Lemma \ref{speedlemma}.  Then $h(X,x) = 0$ if and only if almost surely $X$ satisfies the Liouville property.  Furthermore, if $h(X,x) \g 0$, then almost surely the space of bounded harmonic functions on $X$ is infinite dimensional.
\end{theorem}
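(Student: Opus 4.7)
The forward implication $h(X,x) = 0 \Rightarrow X$ is Liouville almost surely is already established in \cite[Theorem 3.2]{benjamini-curien2012} and requires no further work. The reverse implication together with the \emph{furthermore} clause both follow from a single statement: if $h(X,x) \g 0$, then almost surely the space of bounded harmonic functions on $X$ is infinite dimensional. The plan is to pass to a suitably looped graph in order to invoke Derriennic's zero-two law, and then to combine Lemmas \ref{entropyformula} and \ref{informationandharmoniclemma}.

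Define $(X',x)$ as the random graph obtained from $(X,x)$ by attaching $\degree(v)$ additional loops at every vertex $v \in V(X)$. This modification depends only on the isomorphism class of $X$, so $(X',x)$ inherits stationarity, and one checks that ergodicity is preserved as well. Graph distances on $X'$ agree with those on $X$, hence assumption (\ref{assumption1}) transfers to $(X',x)$. A direct computation using $p'(x,y) = p(x,y)/2$ for $y \neq x$ shows that a bounded function $f \colon V(X) \to \R$ is harmonic on $X$ if and only if it is harmonic on $X'$, so the two spaces of bounded harmonic functions coincide. Since at each time a step of the walk on $X'$ is an actual step of the walk on $X$ with probability at least $1/2$, one has $\ell(X',x) = \ell(X,x)/2$; by Lemma \ref{speedlemma}, the hypothesis $h(X,x) \g 0$ therefore forces $0 \l h(X',x) \l +\infty$.

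By construction $p'(v,v) \ge 1/2$ at every vertex of $X'$, so Corollary \ref{zerotwocorollary} gives that, almost surely, $\F^\infty$ and $\F^\inv$ are equivalent for the simple random walk on $X'$. Suppose toward contradiction that with positive probability the space of bounded harmonic functions on $X$ is finite dimensional. The dimension depends only on the unrooted graph $X$, hence is shift-invariant along the walk, and by ergodicity is almost surely equal to some constant $d \l +\infty$. Applying Lemma \ref{informationandharmoniclemma} to $(X',x)$ then yields $I_m^\infty(X',x) \le \log d$ almost surely, for every $m \ge 1$. On the other hand, Lemma \ref{entropyformula} applied to $(X',x)$ gives $\E(I_m^\infty(X',x)) = m h(X',x) \to +\infty$, contradicting the uniform bound $\log d$. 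Hence the dimension is almost surely infinite.

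The main obstacle is to rigorously set up the looping construction: establishing that $(X',x)$ is an ergodic stationary random graph, that assumption (\ref{assumption1}) and the coincidence of bounded harmonic function spaces survive the modification, and that $h(X',x)$ is both positive and finite. Once these properties are secured, the remainder of the argument is a direct combination of Corollary \ref{zerotwocorollary} with Lemmas \ref{entropyformula} and \ref{informationandharmoniclemma}, together with ergodicity of $(X,x)$.
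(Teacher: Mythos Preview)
Your proposal is correct and follows essentially the same approach as the paper: pass to the looped graph $(X',x)$ so that Corollary \ref{zerotwocorollary} applies, transfer positivity of entropy via the drift and Lemma \ref{speedlemma}, and then combine Lemmas \ref{entropyformula} and \ref{informationandharmoniclemma} with ergodicity. The only cosmetic differences are that the paper re-derives the forward implication from Lemma \ref{entropyformula} rather than citing \cite{benjamini-curien2012}, and phrases the final step directly (for each $d$, choose $m$ with $mh(X',x)>\log d$ and use $\P(I_m^\infty(X',x)\ge mh(X',x))>0$) rather than by contradiction.
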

\begin{proof}
By Lemma \ref{entropyformula}, for each $m$, one has
\[h(X,x) = \E\left(\frac{1}{m}I_m^\infty(X,x)\right).\]

\noindent Hence, if $h(X,x) = 0$, then almost surely one has $I_m^\infty(X,x) = 0$ for all $n$.  By Lemma \ref{informationandharmoniclemma}, this implies that $(X,x)$ satisfies the Liouville property almost surely as claimed.

\medskip
\noindent Assume now that $h(X,x) \g 0$. Notice that by Lemma \ref{speedlemma}, the linear drift of the random walk on $(X,x)$ is positive.  We consider a stationary random graph $(X',x)$ obtained from $(X,x)$ by adding $\degree(y)$ edges connecting each vertex $y$ to itself (see Figure \ref{figuralazos}).  The simple random walk on this new random graph differs from the old one by a geometric waiting time with expectation $2$.  In particular, the linear drift of the simple random walk on $(X',x)$ is also positive.  By Lemma \ref{speedlemma}, this implies $h(X',x) \g 0$.

\medskip
\noindent Using Lemma \ref{entropyformula} as above, one obtains
\[mh(X',x) = \E(I_m^\infty(X',x)),\]
so that 
\[\P\left(I_m^\infty(X',x) \ge mh(X',x)\right) \g 0.\]

\noindent Notice that $(X',x)$ almost surely satisfies the hypothesis of Lemma \ref{zerotwocorollary}, so that $\F^\inv$ and $\F^\infty$ are equivalent.  Therefore, we may apply Lemma \ref{informationandharmoniclemma} and obtain from the inequality above (choosing $m$ so that $mh(X',x) \g \log(d)$) that for each $d$, the probability that $(X',x)$ admits more than $d$ linearly independent bounded harmonic functions is positive.  Since the space of bounded harmonic functions remains unchanged by adding $\deg(y)$ loops at each vertex, one obtains that for each $d$, the probability that $(X,x)$ admits at least $d$ linearly independent bounded harmonic functions is positive.  Because $(X,x)$ is ergodic, almost surely the space of bounded harmonic functions on $(X,x)$ is infinite dimensional. This concludes the proof.
\end{proof}

\begin{figure}[H]
\centering
\begin{minipage}{0.4\textwidth}
\vspace{1.15cm}
\includegraphics{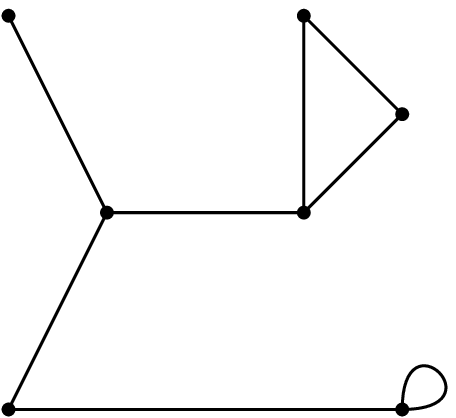}
\end{minipage}
\begin{minipage}{0.4\textwidth}
\includegraphics{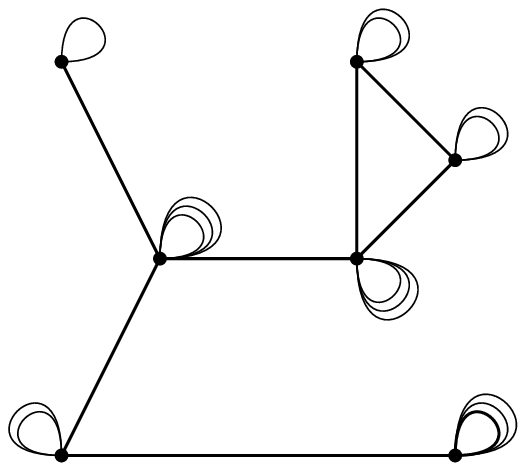}
\end{minipage}
\caption{\label{figuralazos}The stationary distribution $m(x) = \degree(x)/\sum \deg(y)$ of a finite graph does not change when one adds $\degree(x)$ loops at each vertex $x$.  
For general stationary random graphs this process does not affect stationarity.}
\end{figure}

\section{Applications\label{applications}}

\subsection{Random bridges on a tree}

As an application of Theorem \ref{entropytheorem} we will construct a stationary random graph whose random walk has positive linear drift.  It follows that almost surely the graph admits an infinite dimensional space of bounded harmonic functions, a property which, to the best of our knowledge, is not easily established by other means.

In the context of random walks on groups, a similar example, is given by the Cayley graph of the lamplighter group on $\Z^3$.  It is quite simple to establish that the random walk on this group has positive linear drift but, a priori, non-constant harmonic functions are not so easy to exhibit.  However, all harmonic functions have recently been explicitely identified thanks to the works of Erschler, Lyons and Peres (see \cite{erschler2011} and \cite{lyons-peres2014b}).

Our random graph is a simplified variant of the Stochastic Hyperbolic Infinite Quadrangulation, see \cite[Section 6.3]{benjamini2011}.  The difference is that we label edges with only $+1$ and $-1$ (never $0$) and we join the vertex at a corner to the first corner such that the sum along edges is $0$ (instead of $-1$ as in the SHIQ, see Figure \ref{figuraejemplo}).

This last modification implies that our random graph is regular, while the SHIQ almost surely has vertices with arbitrarily large degree.   However, our graph is non-planar, and the new edges connect points which are arbitrarly far away on the tree. In particular, the graph is not quasi-isometric to the tree.  Hence, even though our graph is transient, having the regular tree as a subgraph \cite{lyons1983}, the existence of non-constant bounded harmonic functions does not follow from \cite{benjamini-schramm1996}. In principle the graph might be almost planar, i.e. admit a quasi-monomorphism onto a planar graph, we conjecture that this is 
not the case.  We now give the details of the construction.

To begin, take a regular degree three tree $T_0$ with some fixed root $x$.  We consider this graph embedded in the plane without self crossings so that there is an order (say clockwise) among the three edges sharing each vertex.  We define a corner as the angular sector between two consecutive edges.  There is a partial order on the set of corners which is given by the clockwise contour of the graph.

A graph $(T,x)$ rooted at $x$ is constructed as follows:  A random label $+1$ or $-1$ is chosen with probability $1/2$ independently for each edge of $T_0$.  For each vertex $y$ and each corner at $y$, we add an edge joining $y$ to the vertex $z$ of the first corner in the partial order such that the sum of labels along the shortest path from $y$ to $z$ is equal to $0$.  It follows that the random graph $(T,x)$ is almost surely regular with all vertices of degree 9. In particular, the assumption (\ref{assumption1}) of Lemma \ref{speedlemma} is trivially satisfied.

We first show that $(T,x)$ is stationary. We do so by showing that it is unimodular, which is equivalent to stationarity since $(T,x)$ is regular (see below). 

Recall that a random rooted graph $(X,x)$ is said to be unimodular if for every function $F$, going from the space of isomorphism classes of graphs with two ordered roots to $[0,+\infty)$, one has
\[\E\left(\sum\limits_{y \in V(X)}F(X,x,y)\right) = \E\left(\sum\limits_{y \in V(X)}F(X,y,x)\right).\]
If a random rooted graph $(X,x)$ defined on some probability space $(\Omega,\F,\P)$ is unimodular and $\E(\deg(x)) \l +\infty$, then $X$ is stationary with respect to the probability measure $\mathbb{Q}$ defined by 
\begin{equation}\label{bias}
\frac{\d \mathbb{Q}}{\d \P}(X,x) = \frac{\deg(x)}{\E_\P(\deg(x))}. 
\end{equation}
See for example \cite[Section 2.2]{benjamini-curien2012}. Since $\P$ and $\mathbb{Q}$ are absolutely continuous, the almost sure properties of $(X,x)$ coincide with that of a stationary random graph.

\begin{lemma}
The random graph $(T,x)$ just constructed is stationary.
\end{lemma}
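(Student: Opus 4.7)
The plan is to verify the mass transport principle for $(T,x)$: for every Borel $F:\{\text{doubly-rooted graphs}\}\to[0,+\infty]$,
\[
\E\!\left[\sum_{y\in V(T)} F(T,x,y)\right] = \E\!\left[\sum_{y\in V(T)} F(T,y,x)\right].
\]
Once this is established, unimodularity combined with formula~(\ref{bias}) yields stationarity, and because $(T,x)$ is almost surely $9$-regular, the biasing factor is constant, so $\mathbb{Q}=\P$ and no change of measure is needed.

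The relevant symmetry comes from the group $G$ of graph automorphisms of $T_0$ that preserve the prescribed clockwise cyclic order of edges at every vertex. Because $T_0$ is a tree, choosing the image of a single edge at a root determines the entire automorphism, so $G$ acts transitively on $V(T_0)$ with vertex stabilizer $\mathbb{Z}/3\mathbb{Z}$, and a direct orbit-counting calculation (balancing orbits of $G_u$ on vertices at distance $d$ from $u$) shows $G$ is a unimodular locally compact group, so $T_0$ is a unimodular transitive graph. The labeling $\omega\in\{\pm 1\}^{E(T_0)}$ is i.i.d., so its law is $G$-invariant. The construction $\omega\mapsto T(\omega)$ is $G$-equivariant: each $\phi\in G$ sends corners to corners preserving the clockwise contour, sends the shortest path from $y$ to $z$ in $T_0$ to the one from $\phi(y)$ to $\phi(z)$, and the label sum along that path is preserved after pushing $\omega$ forward by $\phi$, so the edges added by the rule at matched corners correspond.

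Given $G$-equivariance and $G$-invariance of the law of $\omega$, the mass transport principle for $(T,x)$ reduces to the mass transport principle for the deterministic graph $T_0$ under $G$. Concretely, for each $y\in V(T_0)$ pick $\phi_y\in G$ with $\phi_y(x)=y$; the map $\phi_y^{-1}:(T(\omega),x,y)\to(T(\phi_y^{-1}\cdot\omega),\phi_y^{-1}(x),x)$ is a rooted isomorphism, and invariance of the law of $\omega$ gives $\E[F(T,x,y)]=\E[F(T,\phi_y^{-1}(x),x)]$. Summing over $y\in V(T_0)$ and using unimodularity of $G$ to reindex the collection $\{\phi_y^{-1}(x)\}_y$ as a sum over $V(T_0)$ with the correct multiplicities produces $\sum_{z}\E[F(T,z,x)]$, the right-hand side.

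The main delicacies are two. First, one must check that the edge-addition rule is almost surely well-defined: starting from a corner at $y$ and following the clockwise contour, the running sum of labels on the path from $y$ to the current vertex is a simple random walk on $\mathbb{Z}$, since each contour step either extends the path by one edge (adding $\pm 1$) or retracts it by one edge (subtracting $\pm 1$), so it returns to $0$ almost surely and a first such corner always exists. Second, one must carefully identify $G$ and confirm its unimodularity; the rest of the argument is routine.
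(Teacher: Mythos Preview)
Your proof is correct and shares the paper's core idea: deduce unimodularity from the automorphisms of the planar ternary tree that preserve the cyclic edge-orders at each vertex, then use $9$-regularity so that the biased measure $\mathbb{Q}$ coincides with $\P$. The executions differ. The paper exhibits, for each neighbor $y$ of $x$, the $180^\circ$ rotation about the midpoint of the edge $\{x,y\}$; this involution lies in your group $G$, swaps $x$ and $y$, and preserves the law of the labeling, giving the termwise identity $\E[F(T,x,y)]=\E[F(T,y,x)]$ directly. For general $y$ the paper composes such midpoint symmetries along the geodesic from $x$ to $y$. You instead work with the full group $G$, observe it is transitive with finite vertex stabilizers (hence discrete and automatically unimodular), and reduce the mass transport principle for $(T,x)$ to the standard one for diagonally $G$-invariant functions on $T_0$; this is the usual ``invariant decoration of a unimodular transitive graph'' argument and handles all pairs $(x,y)$ uniformly, at the cost of a short abstract detour. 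Two small remarks: the orbit-counting check of unimodularity is unnecessary, since any countable discrete group is unimodular; and in your well-definedness argument the clockwise contour in a $3$-regular tree never retracts (every vertex has degree $3$), so the running label sum is genuinely a simple random walk on fresh i.i.d.\ $\pm 1$ increments.
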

\begin{proof}
Suppose $L$ is a random labeling of the sides of the ternary tree. Given a vertex $y$ in the tree $T_0$, let $T(L,x,y)$ denote the isometry class, in the space of graphs with two ordered roots, of the graph $T$ obtained from the labeling $L$ with two ordered roots at $x$ and $y$ respectively.

The claim is that for every function $F$, going from the space of isomorphism classes of graphs with two ordered roots to $[0,+\infty)$, one has
\[\E\left(\sum\limits_{y\in V(T_0)}F(T(L,x,y))\right) = \E\left(\sum\limits_{y\in V(T_0)}F(T(L,y,x))\right).\]

Notice that since the vertices of $T$ are deterministic, it suffices to show that for each fixed $y$ one has
\begin{equation}\label{bridgesunimodular}\E\left(F(T(L,x,y))\right) = \E\left(F(T(L,y,x))\right).\end{equation}

To see this, we assume that the underlying ternary tree $T_0$ has been embedded into the Hyperbolic plane in such a way that all edges have the same length and meet at each vertex forming $120^\circ$ angles.

Under this assumption the hyperbolic central symmetry with respect to the midpoint of any edge leaves the graph invariant and hence uniquely determines an isomorphism of the tree.  Any such symmetry acts on a labeling $L$ in the obvious way.

Assume now that $y$ is a neighbor of $x$ in the ternary tree and let $\sigma$ be the hyperbolic central symmetry with respect to the midpoint of the edge joining $x$ to $y$.  Notice that the graph $T(L,y,x)$ is isomorphic to $T(\sigma_*L,x,y)$, where $\sigma_*L$ is the labeling $L$ rotated using $\sigma$. Since $\sigma_*L$ has the same distribution as $L$, this establishes (\ref{bridgesunimodular}) as claimed.

The general case follows by changing the labeling using the composition $\sigma_1\circ \cdots \circ \sigma_n$, where the $\sigma_i$ are the central symmetries with respect to the midpoints of the edges in the shortest path joining $x$ to $y$.
\end{proof}

We now verify that the simple random walk on $(T,x)$ has positive linear drift.

\begin{lemma}
The simple random walk on $(T,x)$ has positive linear drift.
\end{lemma}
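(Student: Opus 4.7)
The plan is to prove positive linear drift by exhibiting a function $F:V(T)\to \R$ that is $1$-Lipschitz with respect to the graph metric of $T$ and whose value along the simple random walk has a strictly positive expected one-step increment at stationarity. Once such an $F$ is found, ergodicity of $(T,x)$ together with Birkhoff's theorem applied to the stationary sequence $F(X_{n+1})-F(X_n)$ yield $F(X_n)/n\to c>0$ almost surely, and combining this with $F(X_n)-F(x_0)\le d_T(x_0,X_n)$ gives $\ell(T,x)\ge c>0$.

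A natural first candidate is the function $\phi(w)$ equal to the sum of the edge labels along the unique tree geodesic in $T_0$ from $x$ to $w$. By the very construction of the bridges, every bridge joins two vertices at which $\phi$ takes the same value, while each tree edge changes $\phi$ by $\pm 1$; hence $\phi$ is $1$-Lipschitz on $T$. However, the i.i.d.\ symmetry of the labeling makes $\phi(X_n)$ a centered martingale with variance of order $n$, so this only recovers the diffusive bound $d_T(x_0,X_n)\ge C\sqrt{n}$ and is insufficient for linear drift.

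To boost the bound to linear order I would exploit the asymmetry of the bridge construction: every bridge is oriented ``forward'' in the clockwise contour partial order on corners. Fixing the hyperbolic embedding of $T_0$ with $120^\circ$ angles at each vertex and picking a boundary point $\xi$, the associated Busemann function $b_\xi$ is $1$-Lipschitz on $T_0$ but fails to be so on $T$, since bridges can jump arbitrarily far along $b_\xi$. The plan is to construct a regularized or truncated variant $\psi$ of $b_\xi$ which remains $1$-Lipschitz across bridges while retaining a positive expected increment. Computing this increment would then reduce to an analysis of the distribution of bridge endpoints coming from excursion theory for the symmetric walk on $\Z$ that governs zero-sum returns of the labeling, combined with the mass transport / unimodularity structure of $(T,x)$ to turn local averages at the root into integrals against the stationary law.

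The main obstacle, and the technical heart of the argument, is the construction of $\psi$: the $1$-Lipschitz constraint across long bridges is in tension with the need to preserve a positive drift contribution from the bridges themselves, and balancing these two requirements demands a delicate combinatorial analysis of how bridges partition corners in the contour.
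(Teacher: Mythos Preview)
Your proposal is not a proof: it is a plan whose decisive step --- the construction of a $1$-Lipschitz function $\psi$ on $T$ with strictly positive expected one-step increment --- is explicitly left open. You correctly observe that the label-sum $\phi$ is $1$-Lipschitz but has zero drift, and that the Busemann function $b_\xi$ of $T_0$ has positive drift along tree edges but is not $1$-Lipschitz on $T$ because bridges can connect vertices arbitrarily far apart in $T_0$. Interpolating between these two, as you suggest, is exactly the difficulty; the ``delicate combinatorial analysis'' you mention is the entire content of the argument, and nothing in your write-up indicates how to carry it out or why it should succeed. As written, there is no result.

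The paper's proof sidesteps all of this with a two-line isoperimetric argument. The graph $T$ is $9$-regular and contains the $3$-regular tree $T_0$ as a spanning subgraph on the same vertex set; hence for any finite vertex set $S$ the edge boundary in $T$ dominates the edge boundary in $T_0$, and $T$ inherits a deterministic strong isoperimetric (Cheeger) constant from $T_0$. By the theorem of Vir\'ag \cite{virag2000} (see also Gerl \cite{gerl1988}), any bounded-degree graph with a strong isoperimetric inequality has $\liminf_n d(x,x_n)/n \ge \epsilon > 0$ almost surely for its simple random walk. No Lipschitz test function, no analysis of bridge lengths, and no use of the stationarity or unimodularity of $(T,x)$ is needed for this step.
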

\begin{proof}
Since $T$ is regular with degree $9$, has the same vertex set as the tree $T_0$, and contains $T_0$ as a subgraph, then $X$ satisfies the strong isoperimetric inequality with a deterministic isoperimetric constant.  Hence by \cite[Theorem 1.1]{virag2000} (see also \cite{gerl1988}) there exists a constant $\epsilon>0$ such that
$$\liminf_{n\to +\infty}\frac{d(x,x_n)}{n}\geq \epsilon$$
almost surely.
\end{proof}

Notice that the distribution of $(T,x)$ has compact support. Hence it can be written as the average of ergodic distributions by Choquet's theorem.  For almost all of these ergodic distributions, it follows from the previous lemma that the linear drift of the random walk is positive. Hence, by Theorem \ref{entropytheorem}, almost every graph admits an infinite dimensional space of bounded harmonic function.  We conclude that the dimension of the space of bounded harmonic functions on $(T,x)$ is infinite dimensional almost surely, as claimed.

\begin{figure}[H]
\centering
\includegraphics{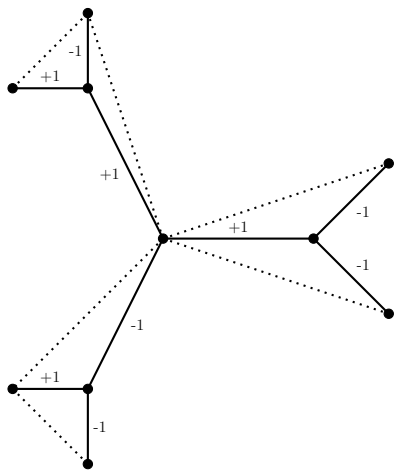}
\caption{\label{figuraejemplo}The ball of radius two centered at $x$ in a realization of the random graph $(T,x)$.}
\end{figure}

\subsection{Volume growth and Canopy Trees\label{canopysection}}

The main limitation of Theorem \ref{maintheorem} is that there is no general method to verify the growth hypothesis needed on the graph.  In fact, in this section we will show that there exist recurrent, and hence Liouville, stationary random graphs with super-exponential volume growth.  The example was communicated to the authors by Elliot Paquette who attributed it to Asaf Nachmias.

Given a sequence of natural numbers $a_1,a_2,\ldots$ construct a tree as follows:
\begin{enumerate}
 \item Begin with a single ``level-$1$'' vertex joined to $a_1$ ``level-$0$'' vertices and call the resulting tree $T_1$.
 \item Join a single (new) ``level-$2$'' vertex to the ``level-$1$'' vertices of $a_2$ copies of $T_1$ to obtain the tree $T_2$.
 \item For each $n \ge 3$ join a single ``level-$n$'' vertex to the ``level-$(n-1)$'' vertices of $a_n$ copies of $T_{n-1}$ to obtain $T_n$.
\end{enumerate}

We call the unrooted tree obtained as the union of all $T_n$ the ``Canopy tree'' determined by the sequence $a_1,a_2,\ldots$.   Notice that the isomorphism group of such a tree preserves and acts transitively on the set of level-$n$ vertices for each $n$.

In the case where the sequence $a_n$ is constant and equal to $2$, we obtain the Canopy tree as defined by Aizenman and Warzen in \cite{aizenman-warzel2006}.  One obtains a stationary random graph by rooting the graph randomly at a level-$0$ vertex with probability $1/4$, and at a level-$n$ vertex with probability $3/2^{n+2}$ for each $n \ge 1$.  This is a nice example of a recurrent graph, in particular Liouville, with exponential volume growth. The ball of radius $2n$ contains at least $2^n$ and no more than $3^{2n+1}$ vertices.

On the other hand, if all $a_n$ are equal to $1$ one obtains a single vertex at each level and there is no way of choosing a random root in a such a way that the resulting random graph is stationary.  The following lemma shows that in general one may construct a stationary graph from a Canopy tree if the sequence $a_n$ does not contain too many ones.

\begin{lemma}
The Canopy tree determined by a sequence $a_1,a_2,\ldots$ admits a random root such that the resulting graph is stationary if and only if
\[\sum \frac{1}{a_1a_2\cdots a_n} \l +\infty.\]
\end{lemma}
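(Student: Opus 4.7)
The plan is to reduce the existence of a stationary random root on the Canopy tree $T$ to the existence of an invariant probability distribution for a birth-and-death chain on the levels $\Z_{\geq 0}$, and then compute that invariant measure explicitly via detailed balance. By construction, the tree $T_n$ at each stage is unique up to rooted isomorphism, and the permutations of the $a_{n+1}$ copies of $T_n$ used to build $T_{n+1}$ extend to automorphisms of $T$; hence the automorphism group of $T$ acts transitively on each level. Consequently the rooted isomorphism class of $(T,v)$ depends only on the level of $v$, and a random root produces a stationary random graph if and only if the induced distribution $(p_n)_{n \geq 0}$ on levels is invariant under the Markov chain on $\Z_{\geq 0}$ obtained by projecting the simple random walk on $T$.

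Next I would compute the transition probabilities of this projected chain. A level-$n$ vertex with $n \geq 1$ has $a_n$ neighbors at level $n-1$ and a single neighbor at level $n+1$, giving degree $a_n+1$; so from level $n$ the chain moves to $n-1$ with probability $a_n/(a_n+1)$ and to $n+1$ with probability $1/(a_n+1)$, while from level $0$ it moves to level $1$ deterministically. This chain is irreducible and nearest-neighbor on $\Z_{\geq 0}$, hence admits a unique (up to scaling) reversible measure, and for such chains a stationary probability distribution exists exactly when this measure is normalizable. Solving the detailed balance equation $p_n/(a_n+1) = p_{n+1}\,a_{n+1}/(a_{n+1}+1)$ inductively from the boundary condition $p_0 = p_1 \, a_1/(a_1+1)$ yields
\[p_n = p_0 \, \frac{a_n + 1}{a_1 a_2 \cdots a_n}, \qquad n \geq 1.\]
Decomposing $(a_n+1)/(a_1 \cdots a_n) = 1/(a_1 \cdots a_{n-1}) + 1/(a_1 \cdots a_n)$ then shows that the normalizability condition $\sum_n p_n < +\infty$ is equivalent to $\sum_n 1/(a_1 \cdots a_n) < +\infty$, which is exactly the stated criterion.

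The only conceptually non-routine step is the symmetry reduction in the first paragraph: one must simultaneously verify that the automorphism group of $T$ acts transitively on each level, and that stationarity of a random rooted graph whose underlying graph is deterministic genuinely reduces to invariance of the induced distribution on rooted isomorphism classes under one step of the walk. Once these points are established, the remaining work is a standard analysis of reversible nearest-neighbor chains on $\Z_{\geq 0}$.
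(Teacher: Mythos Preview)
Your proposal is correct and shares the paper's overall strategy: both reduce the question to the existence of a stationary probability for the birth--death chain on levels with the same transition probabilities, and both arrive at the same invariant measure $p_n = p_0(a_n+1)/(a_1\cdots a_n)$.

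The one genuine difference is in the ``only if'' direction. You invoke the standard fact that any stationary measure for an irreducible nearest-neighbour chain on $\Z_{\ge 0}$ must satisfy detailed balance (provable by induction from the reflecting boundary at $0$), so the reversible measure is the unique candidate up to scaling, and non-normalizability rules out any stationary probability. The paper instead argues from positive recurrence: assuming a stationary probability exists, the expected hitting times $m_k$ of level $0$ are finite, and a direct recurrence for $\Delta_k = m_{k+1}-m_k$ is solved explicitly; non-negativity of the $m_k$ then forces $\sum 1/(a_1\cdots a_n) < +\infty$. Your route is shorter and more conceptual, relying on a well-known structural fact about birth--death chains; the paper's route is more self-contained but requires solving a linear recurrence by hand.
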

\begin{proof}
Consider the Markov chain on $0,1,\ldots$ with probability of going from $0$ to $1$ equal to $1$, probability $1/(a_k+1)$ of going from $k$ to $k+1$ for each $k \ge 1$, and probability $a_k/(a_k+1)$ of going from $k$ to $k-1$ for each $k \ge 1$.   There is a random root on the given Canopy tree such that the resulting random graph is stationary if and only if there is a stationary probability for the aforementioned Markov chain.

If the above series converges, then defining $p_0 = 1/S$ and $p_k = (a_k+1)/(Sa_1\cdots a_{k})$ for each $k \ge 1$, where
\[S = 1 + (a_1+1)/a_1 + (a_2+1)/(a_1a_2) + (a_3 +1)/(a_1a_2a_3) + \cdots,\]
one obtains a stationary probability for the Markov chain.

For the converse direction, assume that there is a stationary probability for the chain.  Then the expected value $m_k$ of the hitting time at $0$ for the chain started at $k$ is finite and non-negative for all $k \ge 1$. The $m_k$ satisfy the recurrence relation
\[m_k = a_k m_{k-1}/(a_k+1) + m_{k+1}/(a_k+1) + 1,\]
or equivalently 
\[m_{k+1} - m_k = a_k(m_k - m_{k-1}) - (a_k+1).\]
Setting $\Delta_k = m_{k+1}-m_k$, one obtains, using the general solution for a first order linear recurrence, that
\[\Delta_n = \left(\prod\limits_{k = 1}^{n-1}a_k\right)\left(\Delta_1 - \sum\limits_{k = 1}^{n-1} (a_k+1)/(a_1\cdots a_k)\right).\]
Since $m_k \ge 0$ for all $k$, one must have that the finite sum in the formula for $\Delta_k$ is bounded by $\Delta_1$ (which must be positive) for all $n$.  In particular, one obtains $\sum 1/(a_1\cdots a_n) \l +\infty$ as claimed.
\end{proof}

The Canopy tree determined by the sequence $a_n = n$ satisfies the hypothesis of the above lemma, and therefore can be turned in to a stationary random graph by adding the appropriate random root.  One can verify that the ball of radius $2n$ centered at any vertex of the tree contains at least $n!$ vertices and therefore the graph has super-exponential volume growth. On the other hand, the graph is recurrent and therefore Liouville.  

In the previous example the degree of the root is $k+1$ with probability $(k+1)e^{-1}/k!$ for each $k$, and has finite expectation. This example is also interesting in that the degree of the root determines the isomorphism class of the graph completely. In particular, the degree of the root and the number of elements at distance $r$ are highly dependent random variables even for large $r$. We will now show that in a unimodular graph where the degree of the root is well behaved and the number of elements of the sphere of radius $r$ is ``reasonably independent'' from the degree of the root, one can prove that there is finite exponential growth.

\begin{lemma}\label{growthlemma}
Let $(X,x)$ be a unimodular random graph such that $\E(\deg(x)) \l +\infty$ and there exists a constant $C$ such that
\begin{equation}\label{independence}\E\left(\deg(x)|S_r(x)|\right)\leq C\,\E(|S_r(x)|)\end{equation}
for all $r$, where $|S_r(y)|$ denotes the number of elements at distance $r$ from the vertex $y$ of $X$. Then $v(X,x)<+\infty$.
\end{lemma}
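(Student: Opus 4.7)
The plan is to use the mass transport principle (unimodularity) together with hypothesis (\ref{independence}) to show that $\E(|S_r(x)|)$ grows at most geometrically in $r$, and then conclude by Jensen's inequality.

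First, I would bound $|S_r(x)|$ by the sum of degrees on the previous sphere: since every vertex at distance $r$ from $x$ is a neighbor of some vertex at distance $r-1$, one has
\[|S_r(x)| \le \sum_{y \in S_{r-1}(x)} \deg(y).\]
Then I would apply unimodularity to the transport function $F(X,u,v) = \deg(v)\,\mathbb{1}[d(u,v) = r-1]$, for which the total mass sent out of $x$ equals $\sum_{v \in S_{r-1}(x)} \deg(v)$ while the total mass received at $x$ equals $\deg(x)\cdot|S_{r-1}(x)|$. The mass transport equality then gives
\[\E\left(\sum_{y \in S_{r-1}(x)} \deg(y)\right) = \E(\deg(x)\,|S_{r-1}(x)|),\]
and combining this with hypothesis (\ref{independence}) yields the key recurrence $\E(|S_r(x)|) \le C\,\E(|S_{r-1}(x)|)$.

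Iterating from $\E(|S_0(x)|) = 1$ gives $\E(|S_r(x)|) \le C^r$ for every $r$, so
\[\E(|B_r(x)|) \le 1 + C + \cdots + C^r \le (r+1)\max(1,C)^r.\]
Finally, by Jensen's inequality applied to the concave function $\log$,
\[r^{-1}\E(\log|B_r(x)|) \le r^{-1}\log \E(|B_r(x)|) \le r^{-1}\log(r+1) + \log \max(1,C),\]
and taking $\liminf$ as $r \to +\infty$ yields $v(X,x) \le \log\max(1,C) < +\infty$, as desired.

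The only place that requires a bit of care is the choice of transport function: one must send mass $\deg(v)$ from each $u$ to the vertices $v$ at distance exactly $r-1$ from $u$ (and not the reverse), so that unimodularity converts the degree-weighted sphere sum into $\deg(x)\,|S_{r-1}(x)|$, which is precisely the quantity controlled by hypothesis (\ref{independence}). Everything else reduces to a geometric-series bound and a single application of Jensen's inequality.
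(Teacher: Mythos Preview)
Your proof is correct and follows essentially the same route as the paper: both use unimodularity with the transport $F(X,u,v)=\deg(v)\,\mathds{1}_{d(u,v)=r}$ to convert $\E\bigl(\sum_{y\in S_r(x)}\deg(y)\bigr)$ into $\E(\deg(x)\,|S_r(x)|)$, invoke hypothesis (\ref{independence}), and iterate to get exponential growth of $\E(|B_r(x)|)$. The only cosmetic difference is that you derive the recurrence $\E|S_r(x)|\le C\,\E|S_{r-1}(x)|$ on spheres and then sum, whereas the paper writes the recurrence directly for balls as $\E|B_{r+1}(x)|\le(1+C)\,\E|B_r(x)|$; your final Jensen step is the same implicit step the paper uses to pass from the bound on $\E|B_r(x)|$ to a bound on $v(X,x)$.
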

\begin{proof}
Let $B_r(y)$ be the graph ball of radius $r$ centered at $y\in V(X)$ and $S_r(y)$ be the respective sphere. By the triangle inequality
$$\left|B_{r+1}(x)\right|\leq \left|B_r(x)\right|+\sum_{y\in S_r(x)}\deg(y)=\left|B_r(x)\right|+\sum_{y\in V(X)}\mathds{1}_{d(x,y)=r}\deg(y).$$
Consider the function
$$F(X,x,y)=\mathds{1}_{d(x,y)=r}\deg(y).$$
Then
\begin{align*}
\E\left[\sum_{y\in V(X)}\mathds{1}_{d(x,y)=r}\deg(y)\right]&=\E\left[\sum_{y\in V(X)}F(X,x,y)\right]=\E\left[\sum_{y\in V(X)}F(X,y,x)\right]\\
&=\E\left[\sum_{y\in V(X)}\mathds{1}_{d(x,y)=r}\deg(x)\right]=\E\left[\deg(x)|S_r(x)|\right]\\
&\leq C\,\E|S_r(x)|,
\end{align*}
by our assumption. Therefore
$$\E\left|B_{r+1}(x)\right|\leq \E\left|B_{r}(x)\right|+C\E|S_r(x)|\leq (1+C)\E|B_r(x)|.$$
This implies that $v(X,x)\leq 1+C$. Recall that $(X,x)$ is stationary with respect to the probability measure $\mathbb{Q}$ defined in (\ref{bias}). Notice that condition (\ref{independence}) then implies that $v_{\mathbb{Q}}(X,x)$ (i.e. the volume growth relative to the probability measure $\mathbb{Q}$) is also finite.
\end{proof}

\subsection{Augmented Galton-Watson Tree}

We will now illustrate how Theorem \ref{maintheorem} implies a known result about harmonic functions on Galton-Watson trees.

Consider two independent Galton-Watson trees $T_1$ and $T_2$ with the same offspring distribution $\lbrace p_k: k \ge 0\rbrace$. That is, $p_k$ is the probability that a vertex has $k$ children. We assume that $p_0 = 0$ and the offspring distribution has finite mean and variance.  The Augmented Galton-Watson tree is constructed by joining the roots of $T_1$ and $T_2$ with a single edge and rooting the resulting graph at the root of $T_1$.

It has been shown in \cite{lyons-pemantle-peres1995} that under the above conditions the Augmented Galton-Watson is a stationary random graph and that the simple random walk on it escapes with positive speed given by
\[\ell = \sum\limits_{k}p_k (k-1)/(k+1).\]

Since the offspring distribution has finite mean and variance, the resulting random graph has finite exponential volume growth. See for example \cite[Chapter 12]{lyons-peres2014}.  Hence one may apply Theorem \ref{entropytheorem} to obtain that almost surely the Augmented Galton-Watson tree admits an infinite dimensional space of bounded harmonic functions.

\subsection{Hyperbolic $\kappa$-Markovian triangulations}

In a recent work \cite{curien2014} N.\,Curien has introduced a one parameter family of random infinite triangulations of the plane which generalize the Uniform Infinite Planar Triangulation (UIPT) \cite{angel-schramm2003}. These are called $\kappa$-Markovian planar triangulations where the parameter $\kappa\in (0,\kappa_0]$. The critical parameter $\kappa_0=2/27$ corresponds to the UIPT, while for $\kappa<\kappa_0$ the triangulations are hyperbolic in flavor.

It is shown in \cite{curien2014}, that in the hyperbolic regime ($\kappa<\kappa_0$) these triangulations are almost surely non-Liouville, have anchored expansion and positive linear drift. The proof of the non-Liouville property relies on the planarity of the triangulations and the fact that almost surely they do not possess the intersection property. In this section, we apply Theorem \ref{maintheorem} to provide an alternative proof of the non-Liouville property, and in fact that (when $\kappa<\kappa_0$) the $\kappa$-Markovian triangulation almost surely admits an infinite dimensional space of bounded harmonic functions.

We fix $\kappa\in (0,\kappa_0)$ for the rest of this section. The $\kappa$-Markovian infinite planar triangulation $T$ is a random rooted type II triangulation of the plane. We refer the reader to \cite[Section 1.2]{angel-schramm2003} for the precise definitions. It is defined by the following property: there exists a sequence $\{C_p\}_{p\geq 2}$ of positive numbers, which depends on $\kappa$, such that if $\tau$ is a finite rooted triangulation of the $p$-gon, then 
$$\P(\tau\subset T)=C_p\kappa^{|\tau|},$$
where $|\tau|$ is the number of vertices of $\tau$. Here $\tau\subset T$ means that $T$ is obtained from $\tau$ with coinciding roots, by filling its hole with a necessary unique infinite triangulation of the $p$-gon. By \cite[Section 3.1]{curien2014} $T$ is stationary and ergodic.

\begin{theorem}\label{kmarkovian} Let $\kappa\in (0,\kappa_0)$. The $\kappa$-Markovian triangulation $T$ almost surely admits an infinite dimensional space of bounded harmonic functions.
\end{theorem}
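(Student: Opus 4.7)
The plan is to verify the hypotheses of Theorem \ref{entropytheorem} applied to $(T,x)$. There are three things to check: that $(T,x)$ is an ergodic stationary random graph, that it satisfies the growth condition (\ref{assumption1}), and that its entropy is strictly positive. The first of these is furnished by Curien's paper \cite{curien2014}: stationarity and ergodicity of $T$ are established in Section 3.1 of that work. For the third, it is proved in \cite{curien2014} (as part of the anchored expansion result) that the simple random walk on $T$ has positive linear drift $\ell(T,x) > 0$ whenever $\kappa < \kappa_0$. Consequently, once the growth condition $v(T,x) < +\infty$ is in hand, Lemma \ref{speedlemma} yields $h(T,x) \ge \ell(T,x)^2/2 > 0$, and Theorem \ref{entropytheorem} gives the conclusion immediately.

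The main task, and the step I expect to be the main obstacle, is therefore to establish $v(T,x) < +\infty$. I would approach this by combining the defining Markovian property with the classical enumeration of finite type II triangulations. For any finite rooted type II triangulation $\tau$ of the $p$-gon with $|\tau|$ vertices one has $\P(\tau \subset T) = C_p \kappa^{|\tau|}$, while the number of such $\tau$ with $n$ interior vertices is known to grow like $\kappa_0^{-n}$ up to polynomially bounded prefactors (see \cite{angel-schramm2003}). Since $\kappa < \kappa_0$, the ratio $\kappa/\kappa_0$ is strictly less than $1$, so summing $C_p\kappa^{|\tau|}$ over triangulations of a fixed perimeter $p$ and interior size $\ge n$ produces tails that decay exponentially in $n$.

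The cleanest rigorous implementation of this idea is via the peeling exploration of $T$ developed in \cite{curien2014}: the perimeter and the vertex count of the filled ball $\bar{B}_r(x)$ are jointly described by a Markov chain whose asymptotic behaviour in the hyperbolic regime $\kappa < \kappa_0$ is explicit. From this one should be able to read off an exponential upper bound $\E|\bar{B}_r(x)| \le A \cdot B^r$ with $A,B$ depending only on $\kappa$. Since $|B_r(x)| \le |\bar{B}_r(x)|$, Jensen's inequality then gives $\E(\log|B_r(x)|) \le \log(A) + r\log(B) = O(r)$, so $v(T,x) \le \log B < +\infty$. This completes the verification of the hypotheses of Theorem \ref{entropytheorem}, which then yields almost sure existence of an infinite dimensional space of bounded harmonic functions on $T$.
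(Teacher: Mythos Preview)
Your proposal is correct and follows essentially the same route as the paper: verify stationarity, ergodicity and positive drift by citing \cite{curien2014}, establish the exponential growth bound $\E|B_r(o)|\le C^r$ via the peeling process (this is exactly the content of Lemma~\ref{crecimientocurien}), and then invoke Lemma~\ref{speedlemma} and Theorem~\ref{entropytheorem}. The only caveat is that the expectation bound cannot simply be ``read off'' from Curien's almost sure result $\lambda^{-r}|\bar B_r(o)|\to V$; the paper carries out a genuine peeling analysis, including a moderate deviations estimate (Lemma~\ref{moderate}) and an exponential tail bound for the Boltzmann filling step, so expect this to require real work.
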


For any $r\geq 1$, let $B_r(o)$ denote the sub-triangulation of $T$ consisting of all the triangles which are incident to a vertex at distance less than or equal to $r-1$ from the root. Notice that since $T$ is one-ended, the complement of $B_r(o)$ has only one infinite connected component. Let $\overline{B}_r(o)$ be the hull of $B_r(o)$ obtained by filling-in all the finite components of its complement. By \cite[Theorem 2]{curien2014}, the exponential rate growth of $\overline{B}_r(o)$ is known: there exists a constant $\lambda>1$ and a random variable $V\in (0,+\infty)$, which depend only on $\kappa$, such that 
$$\lambda^{-r}\left|\overline{B}_r(o)\right| \mathrel{\mathop{\longrightarrow}^{\mathrm{a.s.}}_{r\to+\infty}} V.$$
In order to apply Theorem \ref{maintheorem}, we need to control the expected number of elements of the balls.

\begin{lemma}\label{crecimientocurien}
There exists a constant $C$, which depends only on $\kappa$, such that 
$$\E\left(\left|B_r(o)\right|\right)\leq C^r$$
for all $r\in \N$.
\end{lemma}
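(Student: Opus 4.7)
The plan is to reduce the bound to one on the expected size of the hull $\overline{B}_r(o)$, which is more amenable to the peeling machinery, and then to exploit the Markovian structure of the hull process.

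First, since $B_r(o) \subseteq \overline{B}_r(o)$, one has $|B_r(o)| \leq |\overline{B}_r(o)|$, and it suffices to find a constant $C = C(\kappa)$ such that $\E(|\overline{B}_r(o)|) \leq C^r$ for every $r \in \N$.

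The key observation is that the sequence of hulls $(\overline{B}_r(o))_{r \geq 1}$ is a Markov chain: by the defining $\kappa$-Markovian property, conditionally on $\overline{B}_r(o)$ (which is a finite rooted triangulation of some $P_r$-gon, where $P_r$ denotes the length of the boundary of its hole), the triangulation filling that hole has the law of an infinite $\kappa$-Markovian triangulation of the $P_r$-gon. In particular, the joint process $(P_r, |\overline{B}_r(o)|)$ is Markov, and standard peeling computations from Curien's paper yield linear estimates of the form
\begin{equation*}
\E\bigl(P_{r+1} \bigm| P_r\bigr) \leq \lambda P_r + c_1, \qquad \E\bigl(|\overline{B}_{r+1}(o)| - |\overline{B}_r(o)| \bigm| P_r\bigr) \leq a P_r + c_2,
\end{equation*}
for constants $\lambda > 1$ and $a, c_1, c_2$ depending only on $\kappa$ (the same $\lambda$ as in the almost sure convergence stated above the lemma). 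Iterating the first bound gives $\E(P_r) \leq C_1 \lambda^r$, and substituting into the second yields the telescoping inequality $\E(|\overline{B}_{r+1}(o)|) - \E(|\overline{B}_r(o)|) \leq a C_1 \lambda^r + c_2$. Summing in $r$ gives $\E(|\overline{B}_r(o)|) \leq C_2 \lambda^r$ for some $C_2 > 0$, from which the lemma follows by taking any $C > \lambda$ that absorbs the prefactor.

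The main obstacle is deriving the linear conditional-expectation bounds above: the almost sure convergence $\lambda^{-r}|\overline{B}_r(o)| \to V$ does not by itself upgrade to a moment estimate, as one has no obvious uniform integrability on hand. One must therefore pass through the explicit transition kernel of the perimeter Markov chain analyzed in Curien's work, whose positive drift at exponential rate $\lambda$ in the hyperbolic regime $\kappa < \kappa_0$ is precisely what produces the required linear bounds on the conditional expectations of the perimeter and volume increments.
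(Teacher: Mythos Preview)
Your high-level strategy is sound and would work, but it is organized differently from the paper and leaves the crucial step unproved. The paper does not work directly with the layer Markov chain $(P_r,|\overline{B}_r(o)|)_{r\ge 1}$. Instead it runs the peeling-by-layers algorithm, lets $\tau_r$ be the peeling time at which layer $r$ is completed (so $|\overline{B}_r(o)|=V_{\tau_r}$), and proceeds in two stages. First, Angel's tail bound $\P(\Delta\tau_r\ge k\mid P_{\tau_r}=p)\le e^{-bk}$ for $k>ap$, combined with the trivial $P_{\tau_r}\le \tau_r$, gives $\E(\Delta\tau_r)\lesssim\E(\tau_r)$ and hence $\E(\tau_r)\le C^r$. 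Second, after passing to the unconditioned random walk $(X_n,Y_n)$ (so that the increments are i.i.d.), the paper shows the volume increments $\Delta Y_n$ have exponential tails and invokes a moderate-deviations lemma to get $\E(Y_{\tau_r})\lesssim\E(\tau_r)$.

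Your proposed linear bounds $\E(P_{r+1}\mid P_r)\le \lambda P_r+c_1$ and $\E(|\overline{B}_{r+1}(o)|-|\overline{B}_r(o)|\mid P_r)\le aP_r+c_2$ are exactly what would come out of these two ingredients once rephrased at the level of layers: the number of peeling steps to go from layer $r$ to layer $r+1$ is $\Delta\tau_r$, which is linear in $P_r$ in expectation by Angel's estimate, and each peeling step adds at most one to the perimeter and a Boltzmann-distributed number of interior vertices with finite mean. So your approach is not wrong, but the sentence ``standard peeling computations from Curien's paper yield linear estimates'' is hiding precisely the work the paper carries out; the almost-sure limit $\lambda^{-r}|\overline{B}_r(o)|\to V$ quoted from \cite{curien2014} does not by itself provide these first-moment bounds, as you yourself note. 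To turn your plan into a proof you would need to supply Angel's estimate and the Boltzmann tail bound, at which point the argument becomes essentially a reorganization of the paper's.
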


The proof is based on an algorithmic device, called the peeling process, that allows to construct $T$ as a sequence of growing finite triangulations $\{T_n\}_{n\geq 0}$, see \cite{angel2003}. The process starts by declaring $T_0$ to be one of the triangles that are incident to the root of $T$. At each step, $T_n$ is a finite triangulation whose boundary $\partial T_n$ consists of a simple closed curve. Suppose $T_n$ is constructed, and enumerate the boundary vertices $\partial T_n=\{x_1,\ldots,x_p\}$, where $p=|\partial T_n|$ is the perimeter of $T_n$. 

There is a triangle in $T\setminus T_n$ incident to the edge $\{x_1,x_p\}$. If we call the third vertex of this triangle $y$, there are two possibilities for the location of $y$: either $y$ is a new vertex, or $y=x_i$ for some $i\in\{2,\ldots,p-1\}$.  The probabilities of these events are given by
$$\P\left(y\notin\partial T_n\right)=\frac{\kappa C_{p+1}}{C_p},\ \P\left(y=x_i\right)=\frac{C_{p-i+1}Z_{i}}{C_p}.$$
Here $Z_i$ is the partition function associated to the Boltzmann distribution on triangulations of the $i$-gon: if we denote by $\mathcal{T}_i$ the set of all finite triangulations of the $i$-gon, then
$$Z_i=\sum_{\tau\in\mathcal{T}_i}\kappa^{|\tau|-i},$$
and for any $\tau\in \mathcal{T}_i$, the probability of $\tau$ is given by $\kappa^{|\tau|-i}/Z_i$. An explicit formula for $Z_i$ is known, see for example \cite[Section 3.3]{ray2014}, but we will not need it here. To complete the construction of $T_{n+1}$, one fills the hole created in the case when $y=x_i$ with a independent Boltzmann triangulation $\tau$ of the $i$-gon.

This process depends on the choice of an edge to peel at each step. One way to chose the edge to peel is given by the ``peeling by layers'' algorithm: at step $n$, peel the right-most edge of $\partial T_n$ which belongs to the triangle just revealed. Using this algorithm, every vertex of $\partial T_n$ will be eventually in the interior of $T_m$ for some big enough $m\geq n$, and so $T=\bigcup_{n\geq 0}T_n$.

We are interested in the Markov chain
$$(P_n,V_n)=\left(|\partial T_n|,|T_n|\right),\ n\geq 0.$$
We will use the notation $\Delta X_n=X_{n+1}-X_n$ for the increments of a sequence of random variables. The distribution of the increment $\Delta P_n$ is given by
$$\P\left(\Delta P_n=1|P_n=p\right)=\frac{\kappa C_{p+1}}{C_p},\text{ and } \P\left(\Delta P_n=-i|P_n=p\right)=2\frac{C_{p-i}Z_{i+1}}{C_p},$$
where the factor of $2$ is because there are two ways of attaching a triangle to $T_n$ with vertices $x_1,x_p$ and $x_{i+1}$. When $p$ is large, these probabilities converge to a limit distribution. In fact, by \cite[Lemma 4]{curien2014}, if we let $\alpha\in(2/3,1)$ be given by $2\kappa=\alpha^2(1-\alpha)$, then the following limit exists
$$\lim_{p\to+\infty}\beta^pC_p=c_\alpha\in(0,+\infty),$$
where $\beta=\kappa/\alpha$. The limit distribution is therefore given by
$$q_1=\alpha\text{ and }q_{-i}=2\beta^iZ_{i+1} \text{ for }i\geq 1.$$
Consider $(X_n)_{n\geq 0}$ a random walk on the integers, started at $2$, with independent increments following the distribution $\{q_1,q_{-i}\ i\geq 1\}$. In \cite[Lema 4.2]{ray2014} it is shown that this random walk has positive drift given by the expected value of the increments.

Conditionally on $(X_n)_{n\geq 0}$, define $(Y_n)_{n\geq 0}$ so that $\Delta Y_n$ are independent and distributed as the number of internal vertices of a Boltzmann triangulation of the $(-\Delta X_n+1)$-gon (if $\Delta X_n=1$, let $\Delta Y_n=1$ by convention). More precisely, the distribution of $\Delta Y_n$ is given by
$$\P(\Delta Y_n=k)=\begin{cases}q_1+\sum_{i\geq 1}q_{-i}\left|\mathcal{T}_{i+1}^{(1)}\right|\frac{\kappa}{Z_{i+1}},& \text{ if }k=1\\ \sum_{i\geq 1}q_{-i}\left|\mathcal{T}_{i+1}^{(k)}\right|\frac{\kappa^k}{Z_{i+1}}, & \text{ if }k\geq 2,\end{cases}$$
where $\mathcal{T}_{i+1}^{(k)}$ denotes the set of all triangulations of the $(i+1)$-gon with $k$ internal vertices. The exact value of $\left|\mathcal{T}_{i+1}\right|$ is known, see for example \cite[Theroem 2.1]{angel-schramm2003}, but we will not use it here.

In the proof of Lemma \ref{crecimientocurien} we will need the following fact which is a consequence of moderate deviations estimates.  From now on if $f$ and $g$ are non-negative real valued functions defined on a set $A$, we will write $f\lesssim g$ if there exists a constant $C$, such that $f(a)\leq C g(a)$ for all $a\in A$.

\begin{lemma}\label{moderate}
Let $(\xi_n)_{n\geq 1}$ be a sequence of independent identically distributed random variables, such that $\E\left(e^{t|\xi_1|}\right)<\infty$ for some $t>0$. Denote by $\mu=\E(\xi_1)$, and let $\tau\in\N$ be any random variable. Then, for all $\epsilon>0$, there exists a constant $C$ such that
$$\E\left|\sum_{i=1}^\tau \xi_i-\tau\mu\right|\leq C\,\E(\tau^{1/2+\epsilon}).$$
\end{lemma}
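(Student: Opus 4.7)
The strategy is to decompose $\E|S_\tau - \tau\mu|$, where $S_n = \xi_1 + \cdots + \xi_n$, according to the value of $\tau$ and apply H\"older's inequality twice. Since $\tau \in \N$ with $\tau \ge 1$, one may write
\[\E\Bigl|\sum_{i=1}^\tau \xi_i - \tau\mu\Bigr| = \sum_{n \ge 1} \E\bigl[\mathds{1}_{\tau = n}\,|S_n - n\mu|\bigr].\]
The hypothesis $\E e^{t|\xi_1|} < +\infty$ makes all polynomial moments of $\xi_1$ finite, so the Marcinkiewicz--Zygmund (equivalently Rosenthal) inequality provides, for every integer $p \ge 2$, a constant $C_p$ with $\E|S_n - n\mu|^p \le C_p\,n^{p/2}$ for every $n \ge 1$.

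The first application of H\"older, with conjugate exponents $p$ and $q$, bounds each summand by $\E[\mathds{1}_{\tau=n}|S_n-n\mu|] \le C_p^{1/p}\,\P(\tau=n)^{1/q}\,n^{1/2}$. To absorb the factor $n^{1/2}$ into the mass $\P(\tau=n)$, I would then apply H\"older a second time to the series: for a parameter $\gamma$ to be chosen, decompose
\[\sum_n \P(\tau = n)^{1/q}\,n^{1/2} = \sum_n \bigl(\P(\tau=n)^{1/q}\,n^{\gamma}\bigr)\,n^{1/2 - \gamma}\]
and apply H\"older in $\ell^q \times \ell^{q'}$ to obtain the bound $\bigl(\sum_n \P(\tau=n)\,n^{q\gamma}\bigr)^{1/q}\bigl(\sum_n n^{(1/2-\gamma)q'}\bigr)^{1/q'}$.

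Choosing $\gamma = (1/2 + \epsilon)/q$ makes the first factor equal to $\E[\tau^{1/2+\epsilon}]^{1/q}$, while the condition $(1/2 - \gamma)q' < -1$ needed for the second sum to converge reduces, after a short algebraic manipulation, to $q < 1 + 2\epsilon/3$, which can always be arranged by taking $p$ large enough. Because $\tau \ge 1$ a.s.\ one has $\E[\tau^{1/2+\epsilon}] \ge 1$, and since $1/q < 1$, $\E[\tau^{1/2+\epsilon}]^{1/q} \le \E[\tau^{1/2+\epsilon}]$; combining the two H\"older steps yields the desired inequality with a constant $C$ depending only on $\epsilon$ and on the distribution of $\xi_1$.

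The main subtlety I expect is precisely that $\tau$ is allowed to be an arbitrary random variable, neither independent of the $\xi_i$ nor a stopping time, so one cannot condition on $\{\tau = n\}$ and directly quote the central limit theorem or Wald's identity for $S_n - n\mu$. The double H\"older scheme above is exactly what buys this independence, at the cost of the arbitrarily small $\epsilon$ in the exponent.
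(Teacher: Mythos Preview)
Your argument is correct and takes a genuinely different route from the paper's proof. The paper invokes a moderate deviation estimate (from \cite{legall2005}) of the form $\P(|S_n-n\mu|>n^{1/2+\epsilon})\lesssim e^{-n^{1/2+\epsilon}}$, which uses the exponential moment hypothesis in an essential way; it then introduces the ``last bad time'' $\eta=\max\{n:|S_n-n\mu|>n^{1/2+\epsilon}\}$, splits the expectation on $\{\tau=n\}$ into the pieces $\{\eta<n\}$ and $\{\eta\ge n\}$, and handles the latter via Cauchy--Schwarz together with the exponential tail of $\eta$. Your double H\"older scheme avoids the moderate deviation input entirely: the Rosenthal/Marcinkiewicz--Zygmund bound $\E|S_n-n\mu|^p\lesssim n^{p/2}$ needs only finite $p$-th moments, so your proof in fact establishes the lemma under the weaker hypothesis that all polynomial moments of $\xi_1$ are finite. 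The price you pay is the second H\"older step and the choice of $p$ large (equivalently $q$ close to $1$), which is exactly what produces the $\epsilon$-loss---the same loss the paper incurs from the moderate deviation exponent. Both arguments handle the fact that $\tau$ is arbitrary (not a stopping time, not independent of the $\xi_i$) in the same spirit: decouple via H\"older/Cauchy--Schwarz rather than by conditioning. One small point: your final step $\E[\tau^{1/2+\epsilon}]^{1/q}\le \E[\tau^{1/2+\epsilon}]$ uses $\E[\tau^{1/2+\epsilon}]\ge 1$, which is fine under your stated convention $\tau\ge 1$; if $\tau=0$ were allowed the inequality is still easily repaired since the $\tau=0$ term contributes nothing to the left-hand side.
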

\begin{proof}
From \cite[Lemma 1.12]{legall2005}, for any $\epsilon>0$ we have $\P\left(A_n\right)\leq C_1e^{-n^{1/2+\epsilon}} \text{ for all }n\geq 1$, where
$$A_n=\left\{\left|\sum_{i=1}^n \xi_i-n\mu\right|>n^{1/2+\epsilon}\right\}.$$
Here, the multiplicative constant depends on $\epsilon$. Consider $\eta(\omega)=\max\{n:\omega\in A_n\}$. By Borel-Cantelli's lemma, $\eta$ is finite almost surely, and in fact
\begin{equation}\label{cotacurien1}
\P\left(\eta\geq k\right)=\P\left(\bigcup_{n\geq k}A_n\right)\lesssim \sum_{n\geq k}e^{-n^{1/2+\epsilon}}\lesssim k^{1/2+\epsilon}e^{-k^{1/2+\epsilon}},
\end{equation}
We first compute the expectation on the event $\{\tau=n\}$:
\begin{align*}
\E\left(\left|\sum_{i=1}^\tau \xi_i-\tau\mu\right|\mathds{1}_{\{\tau=n\}}\right)&=\sum_{k\geq 0}\E\left(\left|\sum_{i=1}^n \xi_i-n\mu\right|\mathds{1}_{\{\tau=n\}}\mathds{1}_{\{\eta=k\}}\right)\\
&\leq n^{1/2+\epsilon}\P\left(\tau=n\right)+\sum_{k\geq n}\E\left(\left|\sum_{i=1}^n \xi_i-n\mu\right|\mathds{1}_{\{\tau=n\}}\mathds{1}_{\{\eta=k\}}\right)
\end{align*}
By the Cauchy-Schwarz inequality, the second term in the right-hand side of the last inequality is bounded from above by
$$n^{1/2}\mathrm{Var}(\xi_1)^{1/2}\sum_{k\geq n}\P\left(\tau=n,\eta=k\right)^{1/2}\lesssim n^{1/2}\left(\sum_{k\geq n}k^{1/2+\epsilon}e^{-k^{1/2+\epsilon}}\right)^{1/2},$$
where the las inequality follows from (\ref{cotacurien1}). Bounding from above the last sum, we finally obtain the estimate
$$\E\left(\left|\sum_{i=1}^\tau \xi_i-\tau\mu\right|\mathds{1}_{\{\tau=n\}}\right)\leq n^{1/2+\epsilon}\P\left(\tau=n\right) + \gamma_n,$$
where $\gamma_n$ is a summable sequence. Summing over all the possible values for $\tau$ we get the desired inequality.
\end{proof}

\begin{proof}[Proof of Lemma \ref{crecimientocurien}]
For $r\geq 1$, let $\tau_r$ be the first time when all the vertices of $\partial T_n$ are at distance at least $r$ from the root. Then $\overline{B}_r(o)=T_{\tau_r}$, and in particular $V_{\tau_r}=\left|\overline{B}_r(o)\right|$. Notice that the increments $\Delta P_n$ are bounded from above by $1$, so for all $r$ we have $P_{\tau_r}\leq \tau_r$. First we need the following estimate which is proved in \cite[Lemma 4.2]{angel2003}:

\begin{center}
$\exists\ a,b>0$ such that $\P\left(\Delta\tau_r\geq k|P_{\tau_r}=p\right)\leq e^{-bk}$ for all $p$ and $k> ap$.
\end{center}

\noindent From this it follows that for all $p$:
\begin{align*}
\E\left(\Delta\tau_r|P_{\tau_r}=p\right)&\leq \sum_{k=0}^{ap}k\P\left(\Delta\tau_r=k|P_{\tau_r}=p\right)+\sum_{k>ap}ke^{-bk}\leq ap+\sum_{k>ap}ke^{-bk},
\end{align*}
which implies 
$$\E\left(\Delta\tau_r\right)\lesssim \E\left(P_{\tau_r}\right)\lesssim \E\left(\tau_r\right).$$
We have shown that there exists a constant $C$ so that $\E\left(\tau_r\right)\leq C^r$ for all $r\geq 1$.

The key point in what follows is that $(P_n,V_n)$ is equal in distribution to $(X_n,Y_n)$ conditioned on the event $\{X_i\geq 2,\forall i\geq 0\}$ which has positive probability, see \cite[Section 2]{curien2014}. Denote by $\gamma>0$ the probability of this event, then $\E\left(V_{\tau_r}\right)\leq \gamma^{-1}\E(Y_{\tau_r})$.

We first show that the distribution of the increments $\Delta Y_n$ have an exponential tail. Let $k\geq 2$, then
$$\P\left(\Delta Y_n=k\right)=\kappa^k\sum_{i\geq 1}\beta^i\left|\mathcal{T}_{i+1}^{(k)}\right|.$$
Notice that $\beta$ is a monotone function of $\kappa$, so if we take $\kappa<\kappa'<\kappa_0$, the corresponding $\beta'$ satisfies $\beta<\beta'<\beta_0=1/9$. Therefore
$$\P\left(\Delta Y_n=k\right)=\kappa^k\sum_{i\geq 1}\beta^i\left|\mathcal{T}_{i+1}^{(k)}\right|\leq \left(\frac{\kappa}{\kappa'}\right)^k\left[(\kappa')^k\sum_{i\geq 1}(\beta')^i\left|\mathcal{T}_{i+1}^{(k)}\right|\right]\leq \left(\frac{\kappa}{\kappa'}\right)^k.$$
In particular, we are in the hypotheses of Lemma \ref{moderate}. Let $\mu=\E\left(\Delta Y_1\right)$, then for any $\epsilon>0$, there exists a constant $C$ such that
$$\E\left(Y_{\tau_r}\right)\leq \mu\,\E\left(\tau_r\right)+C\,\E\left(\tau^{1/2+\epsilon}\right).$$
Fix $\epsilon\in (0,1/2)$, then by Jensen's inequality we obtain $\E\left(Y_{\tau_r}\right)\lesssim\E(\tau_r)$. This finishes the proof of the lemma.
\end{proof}

\subsection{Poisson Delaunay random graphs}

In this section we denote by $M$ either the $d$-dimensional Euclidean space $\R^d$, or the $d$-dimensional hyperbolic space $\H^d$. In the latter case, we use the Poincar\'e ball model where in polar coordinates the metric is given by
$$\d s^2=\d r^2+\sinh(r)^2\d \theta^2,$$
where $\d \theta^2$ is the standard metric on the sphere $\mathbb{S}^{d-1}$. In both cases we write $\d x$ for the volume element on $M$.

\begin{figure}
\begin{center}
\vspace{0.5cm}
\includegraphics[scale = 0.25]{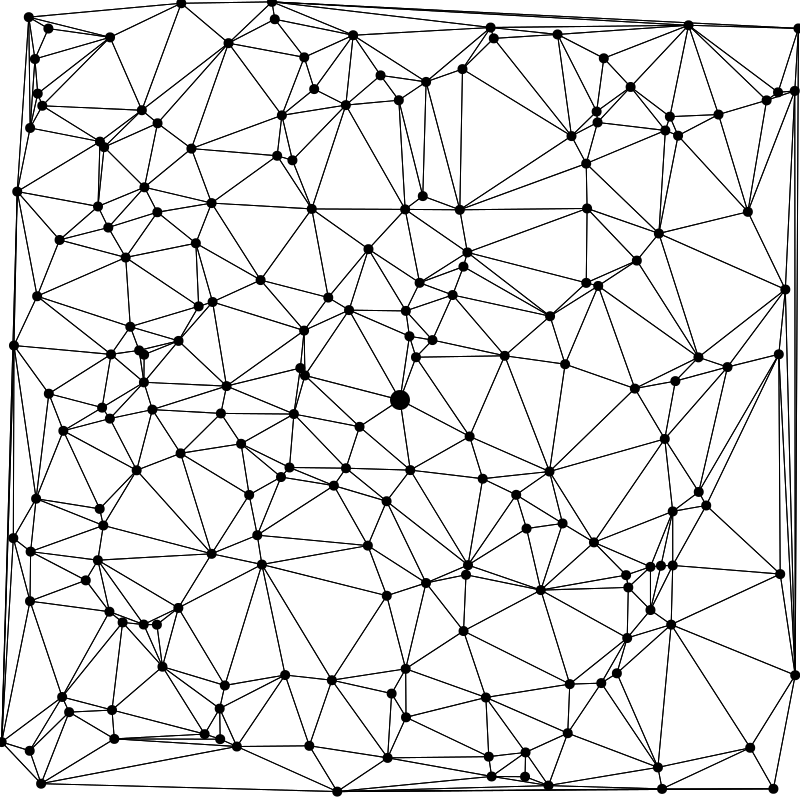}
\end{center}
\caption{\label{poissondelaunayfigure}An approximate Poisson-Delaunay triangulation} This is the Delaunay triangulation associated to a set of independent uniform points in $[-1,1]^2 \subset \R^2$ and rooted at the origin.  As the number of points increases the resulting random graph approximates the Poisson-Delaunay random graph in distribution.
\end{figure}

Let $\Pi$ be a homogeneous Poisson point process of intensity one on $M$. That is, $\Pi$ is a random discrete set of points on $M$ with the following properties: 
\begin{enumerate}
\item the number of points in any Borel set $A$ is a Poisson random variable whose expected value is the volume of $A$;
\item for any two disjoint Borel sets $A$ and $B$, the corresponding Poisson random variables are independent.
\end{enumerate}
We refer to \cite{kingman1993} for an introduction to point processes. We chose the intensity to be one only for simplicity, but the arguments given here go through for the general case of constant intensity with out significant changes.

We set $o = 0 \in M$, and consider the Delaunay graph associated to the discrete set 
$$\Pi_o = \Pi \cup \lbrace o\rbrace.$$
That is, we consider the random rooted graph $X$ with root $o$ and vertex set $\Pi_o$ such that two vertices $x,y \in X$ are joined by a single undirected edge if, and only if, there exists a ball with $x$ and $y$ on its boundary and whose interior contains no points of $\Pi_o$.  The resulting random graph is almost surely the dual graph of a tessellation of $M$ into simplices known as the Voronoi tessellation. See Figure \ref{poissondelaunayfigure} for a realization in $\R^2$.

The main goal of this section is to prove the following

\begin{theorem}\leavevmode \label{Delaunay}
\begin{enumerate}
\item The Euclidean Poisson-Delaunay random graph is almost surely Liouville for all $d$.
\item The Hyperbolic Poisson-Delaunay random graph almost surely admits an infinite dimensional space of bounded harmonic functions for $d=2$.
\end{enumerate}
\end{theorem}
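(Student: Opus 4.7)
The plan is to apply Theorem \ref{entropytheorem} to both the Euclidean and the hyperbolic Poisson-Delaunay graphs; in each case the first task is to verify that $(X,o)$ is an ergodic stationary random graph. By Slivnyak's theorem, the Palm distribution of $\Pi$ at one of its own points coincides with the distribution of $\Pi \cup \{o\}$, which, combined with the invariance of $\Pi$ under the full isometry group of $M$, shows that the rooted Delaunay graph $(X,o)$ is unimodular. Since the expected degree of $o$ is finite in both cases (a standard computation for Poisson-Delaunay graphs), equation (\ref{bias}) produces an equivalent stationary distribution, and ergodicity under the random walk shift follows from the mixing of $\Pi$ under translations of $M$.

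For the Euclidean case I aim to show that $h(X,o)=0$, so that the first part of Theorem \ref{entropytheorem} delivers the Liouville property almost surely. By Lemma \ref{speedlemma} it is enough to prove $v(X,o)=0$. The Poisson process has polynomial volume growth in the ambient metric, and standard tail estimates on edge lengths in the Euclidean Poisson-Delaunay graph let one confine the graph ball $B_r(o)$ inside a Euclidean ball of radius linear in $r$ with overwhelming probability. Treating the atypical event of an extremely long edge by a separate moment bound, this yields $\E(\log |B_r(o)|) = O(\log r)$ and hence $v(X,o)=0$.

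For the hyperbolic case with $d=2$ I apply the second part of Theorem \ref{entropytheorem}, which requires both $v(X,o)<+\infty$ and $h(X,o)>0$. To bound $v(X,o)$ I would verify the asymptotic independence hypothesis (\ref{independence}) of Lemma \ref{growthlemma}: since the Poisson process outside a fixed neighborhood of $o$ is independent of $\deg(o)$, and the contribution of the local configuration to $|S_r(o)|$ vanishes as $r\to+\infty$, this reduces to the boundedness of $\E(\deg(o))$, and Lemma \ref{growthlemma} then delivers $v(X,o)<+\infty$. For positive entropy, Lemma \ref{speedlemma} reduces the task to establishing $\ell(X,o)>0$, for which the strategy is to prove that the hyperbolic Poisson-Delaunay graph satisfies a strong isoperimetric inequality with a deterministic Cheeger constant, and then to invoke Vir\'ag's theorem \cite{virag2000} exactly as in the Random Bridges example of Section \ref{applications}.

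The main obstacle lies in the hyperbolic case: proving the strong isoperimetric inequality with a \emph{deterministic} (non-random) Cheeger constant requires a careful geometric comparison between the combinatorial boundary of a finite set of vertices and the hyperbolic perimeter of a suitable enclosing region, together with a ``local density'' argument ensuring that hyperbolic balls of radius comparable to the typical inter-point distance almost surely contain a Poisson point. The verification of (\ref{independence}) is comparatively simpler, but still requires a quantitative form of the intuitive independence between the local and the large-scale statistics of the Poisson process.
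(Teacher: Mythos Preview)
Your Euclidean argument follows the same route as the paper: unimodularity via Slivnyak, polynomial volume growth via tail bounds on Delaunay edge lengths (this is the paper's Lemma \ref{eucdelaunay}), and then $v(X,o)=0$ forces $h(X,o)=0$ via Lemma \ref{speedlemma}.

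For the hyperbolic part, however, both steps of your plan have real problems. First, your route to $v(X,o)<+\infty$ via Lemma \ref{growthlemma} rests on the claim that ``the Poisson process outside a fixed neighborhood of $o$ is independent of $\deg(o)$''. This is false: the Delaunay neighbors of $o$ are determined by empty circumscribing balls whose radii are unbounded random variables, so $\deg(o)$ is not measurable with respect to $\Pi$ restricted to any fixed ball. Verifying (\ref{independence}) would require a genuine correlation estimate between $\deg(o)$ and $|S_r(o)|$, not a locality argument, and that correlation bound is essentially as hard as the growth bound itself. The paper bypasses Lemma \ref{growthlemma} entirely: Lemma \ref{hypdelaunay} imports the containment estimate $\P(B_r(o)\not\subset B^M_{Lr}(o))\le e^{-cr}$ from \cite[Proposition 4.1]{benjamini-paquette-pfeffer2014} and then bounds $\E|B_r(o)|^2$ directly via Slivnyak, just as in the Euclidean case.

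Second, your plan for $\ell(X,o)>0$ asks for a \emph{deterministic} Cheeger constant. This is almost certainly too strong: the Poisson process in $\H^2$ almost surely contains arbitrarily large nearly-empty hyperbolic disks, producing regions of the Delaunay graph with arbitrarily bad local expansion, so no uniform Cheeger bound can hold. What \cite{benjamini-paquette-pfeffer2014} actually prove is \emph{anchored} expansion, which is precisely the hypothesis of Vir\'ag's theorem \cite{virag2000} but strictly weaker than a Cheeger inequality. The paper accordingly cites \cite[Theorems 1.1 and 1.4]{benjamini-paquette-pfeffer2014} for positive speed in $d=2$ and then combines this with Lemma \ref{hypdelaunay} to invoke Theorem \ref{entropytheorem}.
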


We will first establish that the growth assumption (\ref{assumption1}) on Lemma \ref{speedlemma} is satisfied. The main tool we will use is Slivnyak's formula, see \cite[Proposition 4.1.1.]{moller1994}, which we will now restate for the reader's convenience.
\begin{lemma}[Slivnyak's formula]
Let $\Pi$ be a Poisson process on $M$ with intensity $1$.  For every measurable function $f:M^{*n} \times \mathcal{D} \to [0,+\infty)$, where $M^{*n}$ is the space of $n$-element subsets of $M$ and $\mathcal{D}$ is the space of discrete subsets of $M$, one has
\begin{dmath*}\E\left(\sum\limits_{\lbrace x_1,\ldots,x_n\rbrace \subset \Pi}f\Big(\lbrace x_1,\ldots,x_n\rbrace,\Pi\Big)\right) = {\frac{1}{n!} \int\limits_{M^n} \E\Big[f\Big(\lbrace y_1,\ldots,y_n\rbrace,\Pi\cup \lbrace y_1,\ldots,y_n\rbrace\Big)\Big]\d(y_1,\ldots,y_n)}.\end{dmath*}
\end{lemma}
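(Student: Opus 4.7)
The plan is to prove Slivnyak's formula via three reductions: first truncate to a bounded region by monotone convergence, then use Poisson splitting to reduce to a Poisson process on a bounded domain, and finally carry out a direct calculation with the standard i.i.d.\ representation.

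\textbf{Step 1 (Truncation).} Replace $f$ by $f_K(\{x_1,\ldots,x_n\}, D) = f(\{x_1,\ldots,x_n\}, D) \cdot \mathbf{1}[\{x_1,\ldots,x_n\} \subset K]$ for $K$ ranging over an increasing exhaustion of $M$ by bounded Borel sets. Both sides of the claimed identity are non-negative and monotone non-decreasing in $K$, converging by monotone convergence to the corresponding expressions for $f$. Hence it suffices to prove the formula when the $n$-set $\{x_1,\ldots,x_n\}$ (respectively $\{y_1,\ldots,y_n\}$) is constrained to lie in a fixed bounded Borel set $K$ of finite volume.

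\textbf{Step 2 (Poisson splitting).} Decompose $\Pi = (\Pi \cap K) \sqcup (\Pi \setminus K)$. By the defining property of the Poisson process these two pieces are independent, and $\Pi_K := \Pi \cap K$ is a Poisson process of intensity one on $K$. Condition on $\Pi \setminus K = \pi'$; since $f_K$ vanishes unless every relevant point lies in $K$, both sides of the identity depend on $\Pi$ only through $\Pi_K$ and through the deterministic shift $D \mapsto D \cup \pi'$. This reduces the claim to Slivnyak's formula for $\Pi_K$ on the bounded set $K$ applied to the function $\tilde f(S, D) = f_K(S, D \cup \pi')$, integrated back over $\pi'$.

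\textbf{Step 3 (Bounded computation).} On the bounded set $K$ one has the standard representation $\Pi_K \stackrel{d}{=} \{U_1,\ldots,U_N\}$ with $N \sim \mathrm{Poisson}(|K|)$ and, conditionally on $N = m$, the $U_i$ are i.i.d.\ uniform on $K$. Expanding the left-hand side,
\[\E\!\left(\sum_{\{x_i\} \subset \Pi_K} \tilde f(\{x_i\}, \Pi_K)\right) = \sum_{m \ge n} \frac{|K|^m e^{-|K|}}{m!} \binom{m}{n} \int_{K^n} \E\!\left[\tilde f\Big(\{u_i\}_{i=1}^n,\, \{u_1,\ldots,u_n, U_{n+1},\ldots,U_m\}\Big)\right] \frac{du_1 \cdots du_n}{|K|^n},\]
where the exchangeability of the $U_i$ has collapsed the sum over $\binom{m}{n}$ unordered $n$-subsets into a single integral with the appropriate combinatorial weight. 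Cancelling $|K|^n$, using $\binom{m}{n}/m! = 1/(n!(m-n)!)$, and re-indexing $k = m-n$, the remaining Poisson-weighted sum $\sum_{k \ge 0} \frac{|K|^k e^{-|K|}}{k!} \E[\tilde f(\{u_i\}, \{u_1,\ldots,u_n, U_{n+1},\ldots,U_{n+k}\})]$ reassembles, by the same i.i.d.\ representation, into the single expression $\E[\tilde f(\{u_i\}, \{u_1,\ldots,u_n\} \cup \Pi_K)]$. This yields $\frac{1}{n!} \int_{K^n} \E[\tilde f(\{u_i\}, \{u_i\} \cup \Pi_K)] du_1 \cdots du_n$, which is the desired right-hand side.

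The main obstacle is purely combinatorial bookkeeping in Step 3: tracking the interacting factors $1/m!$, $\binom{m}{n}$, $1/|K|^n$, and $1/n!$, and recognising that the Poisson-weighted sum over $k$ reassembles into the expectation over an independent copy of $\Pi_K$. Steps 1 and 2 are routine applications of monotone convergence and the independent-scattering property. The conceptual content of the formula—that appending $n$ arbitrary points to a Poisson process leaves the conditional distribution of the rest unchanged—becomes transparent once one identifies the Poisson weights on $m = n+k$ with fresh Poisson weights on $k$ by cancelling a factor of $|K|^n/n!$.
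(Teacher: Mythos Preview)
Your proof is correct. The paper itself does not prove this lemma: it is stated with a reference to \cite[Proposition 4.1.1]{moller1994} and used as a black box. Your argument supplies a self-contained proof via the standard route---truncation, independent scattering, and the explicit i.i.d.\ representation of a Poisson process on a set of finite volume---and the combinatorial bookkeeping in Step~3 is carried out accurately (the cancellation $\binom{m}{n}/m! = 1/(n!(m-n)!)$ together with the re-indexing $k=m-n$ does exactly reassemble the Poisson weights). One minor point worth making explicit: in Step~3 the representation $\Pi_K \stackrel{d}{=} \{U_1,\ldots,U_N\}$ as a \emph{set} uses that the volume measure on $M$ is non-atomic, so the $U_i$ are almost surely distinct; similarly the diagonal in $K^n$ has measure zero, so integrating ordered tuples against a function of unordered $n$-sets causes no trouble. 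These are harmless in the present setting but are implicit in your write-up.
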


We will now prove that in the Euclidean case the Poisson-Delaunay graph grows sub-exponentially.

\begin{lemma}\label{eucdelaunay}
Let $d\geq 1$, and $X$ be the Poisson-Delaunay graph rooted at $o = 0 \in \R^d$. For each $r\in\N$, we denote by $B_r(o)$ the ball of radius $r$ centered at $o$ in $X$. Then:
\begin{enumerate}
\item Almost surely $|B_r(o)|=\mathrm{O}(r^d\log^dr)$ when $r\to +\infty$, and
\item $\E(\deg(o))<+\infty$ and $\E_{\mathbb{Q}}{|B_r(o)|}=\mathrm{O}(r^d\log^dr)$ when $r\to+\infty$.
\end{enumerate}
Recall that $\mathbb{Q}$ is the probability measure defined in (\ref{bias}). In particular, $X$ has polynomial volume growth and $v_{\mathbb{Q}}(X,o)=0$.
\end{lemma}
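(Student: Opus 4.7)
The plan is to reduce both statements to a single exponential tail bound on the maximum Euclidean length of a Delaunay edge at a Poisson point, and then iterate outward from the root using Slivnyak's formula and Borel--Cantelli.

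First I would prove: there exist $C,c>0$ depending only on $d$ such that, writing $L(x)$ for the maximum Euclidean length of a Delaunay edge of $X$ incident to the vertex $x$, one has
\[\P(L(o)\geq t)\leq C\exp(-ct^d),\qquad t\geq 1.\]
Indeed, if $L(o)\geq t$ then the empty-circumsphere characterization of Delaunay edges produces a Euclidean ball $B$ of radius at least $t/2$ with $o$ on its boundary and empty of points of $\Pi$ in its interior. A covering argument shows that any such $B$ must contain a ball of radius $t/8$ whose center lies in a fixed collection of $O(1)$ grid points located inside the Euclidean ball of radius $t$ around $o$. Since each such fixed ball is empty with probability $\exp(-c'_d t^d)$, a union bound over the grid yields the claim.

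Granting this tail, finiteness of $\E(\deg(o))$ follows from Slivnyak's formula applied to the indicator that $o$ and $y$ are joined in the Delaunay graph of $\Pi\cup\{o,y\}$: the resulting integrand is bounded by $\P(L(o)\geq |y|)\leq C\exp(-c|y|^d)$, which is integrable on $\R^d$. For the growth estimate, set $R_r=\max\{|x|:x\in B_r(o)\}$ and $M(R)=\sup\{L(x):x\in\Pi,\,|x|\leq R\}$; the triangle inequality in $X$ gives $R_{r+1}\leq R_r+M(R_r)$. Applying Slivnyak and Markov,
\[\P(M(R)\geq t)\leq cR^d\exp(-ct^d),\]
so taking $t=C'(\log R)^{1/d}$ and applying Borel--Cantelli along integer values of $R$ yields $M(R)\leq C'(\log R)^{1/d}$ eventually almost surely. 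Iterating the recursion gives $R_r=O(r(\log r)^{1/d})$, and then standard Poisson concentration bounds the number of points in a Euclidean ball of radius $R_r$ by $O(R_r^d)=O(r^d\log r)$, which implies the stated bound $O(r^d\log^d r)$.

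The second part of the lemma then follows from the identity
\[\E_{\mathbb{Q}}|B_r(o)|=\frac{\E_\P[\deg(o)\cdot|B_r(o)|]}{\E_\P[\deg(o)]},\]
together with a Cauchy--Schwarz estimate that reduces matters to a second moment bound on $\deg(o)$. This second moment is in turn controlled by Slivnyak's formula combined with the tail on $L(o)$, since $\deg(o)$ is dominated by the number of points of $\Pi$ in the Euclidean ball of radius $L(o)$ around $o$, whose second moment is finite thanks to the $\exp(-ct^d)$ decay. The main obstacle is the geometric tail bound in the first step: converting the ``empty ball of radius $\geq t/2$ through $o$'' condition into emptiness of one ball from a finite grid requires the covering argument to be arranged carefully. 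Once that is done, the remaining steps are routine applications of Slivnyak's formula, Markov's inequality, and Borel--Cantelli.
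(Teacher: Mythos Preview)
Your overall strategy for part 1 is close to the paper's and essentially sound: both arguments rest on a tail bound for the longest Delaunay edge at a point, propagated outward via Slivnyak's formula and an iteration on Euclidean radii. Your tail $\P(L(o)\ge t)\lesssim e^{-ct^d}$ is sharper than the $e^{-ct}$ the paper quotes, and correspondingly your recursion $R_{r+1}\le R_r+M(R_r)$ with $M(R)\lesssim(\log R)^{1/d}$ gives the tighter almost sure bound $|B_r(o)|=O(r^d\log r)$, which of course implies the stated $O(r^d\log^d r)$. (A small point: your supremum $M(R)$ should be taken over $\Pi_o$, not $\Pi$, since the root itself lies in every $B_r(o)$.)

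The genuine gap is in part 2. Writing $\E_{\mathbb{Q}}|B_r(o)|=\E[\deg(o)\,|B_r(o)|]/\E[\deg(o)]$ and applying Cauchy--Schwarz gives
\[
\E[\deg(o)\,|B_r(o)|]\le \bigl(\E[\deg(o)^2]\bigr)^{1/2}\bigl(\E[|B_r(o)|^2]\bigr)^{1/2},
\]
so you need a second moment bound on $|B_r(o)|$, not only on $\deg(o)$. Your almost sure estimate from part 1 does not supply this: the implied constant and the threshold $r_0$ after which the bound kicks in are random, and without controlling their tails you cannot pass to expectations. The paper proceeds in the opposite order: it proves directly that $\E|B_r(o)|^2=O\bigl((r\log r)^{2d}\bigr)$ by introducing a random index $K$ (the last scale at which a long edge occurs), decomposing according to $\{K=k\}$, and applying Slivnyak's formula to pairs of points to control each piece; the almost sure statement in part 1 is then read off via Borel--Cantelli. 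To complete your argument you would need to carry out an analogous second-moment computation, i.e.\ control the joint tail of your random threshold and $|B_r(o)|$ well enough to integrate. As written, part 2 is not established.
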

\begin{proof}
We will prove \emph{2}. The first assertion follows from the proof by applying Borel-Cantelli's Lemma. Let $\Pi$ be the Poisson point process in $\R^d$ with intensity $1$, and define $L_\Pi(x)$ to be the Euclidean distance between $x$ and its farthest neighbor in the Delaunay graph of $\Pi \cup \lbrace x\rbrace$.

The proof relies on the following exponential bound for the tail of $L_\Pi(x)$: there is a positive constant $c$ such that
\begin{equation}\label{colaexponencial}\P\left( L_\Pi(x) \g s\right) \lesssim e^{-cs},\end{equation}
for all $s \g 0$. See for example \cite{moller1994,boots-okabe-sugihara1992}.

Let us denote by $B^M_r(o)$ the Euclidean ball of radius $r$ centered at $o$. Let $\{s_r\}_{r\geq 0}$ be a monotone sequence of non-negative numbers, with $r_0=0$, to be chosen later and define $S_r = s_1 + \cdots s_{r-1}$. Applying (\ref{colaexponencial}), we will bound from above the probability that there exists an edge in the Delaunay graph $X$, of Euclidean length at least $s_r$, starting at a point of $B_{S_r}^M(o)$.

Recall that $\mathcal{D}$ denotes the space of discrete subsets of $\mathbb{R}^d$, and consider the function $f:\mathcal{D}\times\mathbb{R}^d\to \R$ given by
$$f(Z,z)=\mathds{1}_{Z \cap B_{S_r}^M(o)}(z)\mathds{1}_{\left\{L_{Z}(z) \g s_r\right\}},\ Z\in\mathcal{D}\text{ and }z\in\R^d.$$
By Slyvniak's formula, we have
\begin{align*}
\P\left(\exists\ x \in \Pi_o \cap B_{S_r}^M(o): L_{\Pi_o}(x) \g s_r\right) &\leq \mathbb{E}\left[\sum_{x\in \Pi_o}f(\Pi_o,x)\right]\\
&=\int_{\mathbb{R}^d}\mathbb{E}\left[f(\Pi_o\cup\{y\},y)\right]\d y\\
&=\int_{B_{S_r}^M(o)}\P\left(L_{\Pi_o}(y) \g s_r\right)\d y\\
&\le \int\limits_{B_{S_r}^M(o)}\P\left(L_{\Pi}(y) \g s_r\right)\d y
\\ &\lesssim S_r^d e^{-cs_r} ,
\end{align*}
Notice that the same upper bound holds if we add more points to $\Pi$ instead of adding just the point $o$. Suppose now that $\{s_r\}$ is chosen so that 
$$\sum_{k=1}^\infty\sum_{r\geq k}S_r^de^{-cs_r}<\infty.$$
Consider the sequence of events
$$A_r=\left\{Z\in\mathcal{D}:\exists\ z \in Z \cap B_{S_r}^M(o)\text{ s.t. } L_{Z}(z) \g s_r\right\},$$
and define $K(Z)=\max\{r:Z\in A_r\}$. Then $\{K\geq k\}=\bigcup_{r\geq k}A_r$, and therefore
$$\E(K)\leq\sum_{k=1}^\infty\sum_{r\geq k}\mathbb{P}(A_r)\lesssim \sum_{k=1}^\infty\sum_{r\geq k} S_r^d e^{-cs_r}<\infty.$$
The event $\{K=k\}$ is
$$\Big\{Z\in\mathcal{D}:\exists\ x\in Z\cap B_{S_k}^M(o)\text{ with }L_Z(x)>s_k\text{ and }L_Z(z)\leq s_r\ \forall z\in Z\cap B_{S_r}^M(o)\ \forall r>k \Big\}.$$
Suppose that $\Pi_o$ satisfies $K(\Pi_o)=k$. This implies that for any $r \ge k$ and any point $x \in \Pi_o \setminus B_{S_{r-1}}$, the distance in the graph $X$ between $o$ and $x$ is at least $r-k$. In other words, we have
$$B_r(o)\subset B_{S_{r+K(\Pi_o)}}^M(o).$$
Decompose the second moment of $|B_r(o)|$ according to the values of $K$,
$$\E{|B_r(o)|^2}=\sum_{k=1}^\infty\E\Big[|B_r(o)|^2\mathds{1}_{\{K(\Pi_o)=k\}}\Big].$$
Then, we obtain the upper bound
$$\E\left|B_r(o)\right|^2\leq \sum_{k=1}^\infty\E\left[\left|\Pi_o\cap B_{S_{r+k}}^M(o)\right|^2\mathds{1}_{\{K(\Pi_o)=k\}}\right].$$
Let $g:\mathcal{D}\times \mathbb{R}^d\to\R$ be the function
$$g(Z,z)=\mathds{1}_{Z\cap B_{S_{r+k}}^M(o)}(z)\mathds{1}_{\{K(Z)=k\}}.$$
By Slyvniak's formula again, we get
\begin{align*}\mathbb{E}\Big[\left|\Pi_o\cap B_{S_{r+k}}^M(o)\right|^2\mathds{1}_{\left\{K(\Pi_o)=k\right\}}\Big]
&=\mathbb{E}\left[\sum_{x_1,x_2\in \Pi_o}g(\Pi_o,x_1)g(\Pi_o,x_2)\right]\\
&=\int_{(\mathbb{R}^d)^2}\mathbb{E}\Big[g(\Pi_o\cup\{y_1,y_2\},y_1)g(\Pi_o\cup\{y_1,y_2\},y_2)\Big]\d(y_1,y_2)\\
&=\int_{B_{S_{r+k}}^M(o)^2}\mathbb{P}\left[\mathds{1}_{\left\{K(\Pi_o\cup\{y_1,y_2\})=k\right\}}\right]\d(y_1,y_2)\\
&\lesssim S_{r+k}^{2d}\sum_{m\geq k}S_m^de^{-cs_m}
\end{align*}
The last inequality follows from
$$\mathbb{P}(K(\Pi_o\cup\{y_1,y_2\})\geq k)\lesssim \sum_{m\geq k} S_m^de^{-cs_m}.$$
Therefore, we obtain
$$\E|B_r(o)|^2\lesssim \sum_{k=1}^\infty S_{r+k}^{2d}\sum_{m\geq k}S_m^de^{-cs_m}.$$
We set $s_r=\frac{\alpha}{c}\log r$, for some fixed $\alpha>3d+2$, so that
$$S_r=\frac{\alpha}{c}\log((r-1)!)\leq \frac{\alpha}{c}r\log r.$$
Notice that there $S_{r+k}\lesssim S_rS_k$. On the other hand, we have
$$\sum_{m\geq k}S_{m}^de^{-cs_m}\leq \frac{\alpha^d}{c^d}\sum_{m\geq k}\frac{\log^d m}{m^{\alpha-d}}\lesssim \frac{\log^d k}{k^{\alpha-d-1}}.$$
From this, we get
$$\sum_{k=1}^\infty S^{2d}_{r+k}\sum_{m\geq k}S_m^de^{-cs_m}\lesssim S_r^{2d}\sum_{k\geq 1}\frac{\log^{3d}(k)}{k^{\alpha-(3d+1)}}=\mathrm{O}(S_r^{2d}).$$
This gives the upper bound $\E|B_r(o)|^2=\mathrm{O}(S_r^{2d})$. By definition, we have
$$\E_{\mathbb{Q}}|B_r(o)|=\E\left[\deg(o)|B_r(o)|\right]\leq \left(\E\deg(o)^2\right)^{1/2}\left(\E|B_r(o)|^2\right)^{1/2}=O(S_r^{d}),$$
where the last step follows from Cauchy-Schwarz inequality. This concludes the proof of \emph{2}.
\end{proof}

In order to estimate the growth of the Hyperbolic Poisson-Delaunay graph we use a result proved in \cite{benjamini-paquette-pfeffer2014}.

\begin{lemma}\label{hypdelaunay} Let $X$ be the Poisson-Delaunay graph rooted at $o = 0 \in \H^d$. For each $r\in\N$, we denote by $B_r(o)$ the ball of radius $r$ centered at $o$ in $X$. Then $\E(\deg(o))<+\infty$ and there exists a constant $L$ such that
$$\E_{\mathbb{Q}}|B_r(o)|=O(e^{L r}) \text{ when }r\to+\infty.$$
In particular,
$$v_{\mathbb{Q}}(X,o)=\liminf_{r\to\infty}\frac{1}{r}\,\E_{\mathbb{Q}}\log |B_r(o)|<+\infty.$$
\end{lemma}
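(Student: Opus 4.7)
The proof follows the template of Lemma \ref{eucdelaunay}, with polynomial volume estimates in $\R^d$ replaced by exponential ones in $\H^d$. The crucial input is an exponential tail bound on the length of the longest Delaunay edge at $o$,
$$\P(L_\Pi(o) > s) \lesssim e^{-cs}, \qquad s \geq 0,$$
proved in \cite{benjamini-paquette-pfeffer2014}; here $L_\Pi(o)$ denotes the hyperbolic distance between $o$ and its farthest Delaunay neighbor in $\Pi \cup \lbrace o \rbrace$, and the constant $c$ may be taken arbitrarily large.

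From this tail estimate, $\E(\deg(o)) < +\infty$ is immediate: $\deg(o)$ is bounded by the number of Poisson points of $\Pi$ inside $B^M_{L_\Pi(o)}(o)$, whose conditional expectation given $L_\Pi(o) = s$ equals the hyperbolic volume $V(s) \asymp e^{(d-1)s}$, and for $c > d-1$ the product is integrable. The same reasoning, applied with $c > 2(d-1)$, also yields $\E(\deg(o)^2) < +\infty$, which will be needed below.

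For the volume growth I would repeat the Slivnyak-based moment argument of Lemma \ref{eucdelaunay}, but now choosing the gauge sequence $s_r$ so that the radii $S_r = s_1 + \cdots + s_{r-1}$ grow linearly in $r$ (for example, $s_r$ constant). One then obtains a containment of the form $B_r(o) \subseteq \Pi_o \cap B^M_{S_{r+K}}(o)$ for an integer-valued random variable $K$ whose exponential moments can be controlled provided $c$ is sufficiently large compared to $d-1$; this produces the second moment bound $\E|B_r(o)|^2 = O(e^{2Lr})$ for an explicit $L$ depending only on $c$ and $d$. Finally, Cauchy--Schwarz gives
$$\E_{\mathbb{Q}}|B_r(o)| = \E\bigl[\deg(o)\,|B_r(o)|\bigr] \leq \sqrt{\E\deg(o)^2}\,\sqrt{\E|B_r(o)|^2} = O(e^{Lr}),$$
and Jensen's inequality together with the definition of $v_{\mathbb{Q}}$ yields $v_{\mathbb{Q}}(X,o) \leq L < +\infty$.

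The principal obstacle is that the Borel--Cantelli step used in the Euclidean case does not transfer directly: there, polynomial ambient volumes allowed a logarithmically growing gauge, whereas here hyperbolic volumes of balls of radius $S_r$ grow exponentially in $r$, so the tails of $L_\Pi$ must be absorbed against an exponential rather than a polynomial in $r$. The whole argument therefore rests on the quantitative strength of the tail estimate imported from \cite{benjamini-paquette-pfeffer2014}, and one must verify that the exponent $c$ provided there is genuinely larger than the required multiple of $d-1$; once this is arranged, the remainder of the proof is a mechanical adaptation of the Euclidean computation.
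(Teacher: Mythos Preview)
Your proposal has a genuine gap in the Borel--Cantelli step, and it is precisely the obstacle you flag but do not actually overcome. With a linear gauge $S_r = s_1 + \cdots + s_{r-1} \asymp r$, the individual increments $s_r$ must stay bounded. The union bound over vertices in the hyperbolic ball $B^M_{S_r}(o)$ then reads
\[
\P(A_r) \;\lesssim\; \mathrm{vol}\bigl(B^M_{S_r}(o)\bigr)\,\P\bigl(L_\Pi > s_r\bigr) \;\asymp\; e^{(d-1)S_r}\,e^{-cs_r},
\]
and since $S_r$ grows linearly in $r$ while $s_r$ is bounded, the right-hand side diverges regardless of how large the fixed constant $c$ is. The condition you would need is $c s_r > (d-1)S_r$ for all large $r$, i.e.\ $c > (d-1)(r-1)$, which no finite $c$ satisfies. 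So ``$c$ larger than the required multiple of $d-1$'' is a misreading of the inequality: the competing exponent is not a fixed multiple of $d-1$ but one that grows with $r$. Even a doubly-exponential tail for $L_\Pi$ would not help here, because $s_r$ is bounded and hence $\P(L_\Pi > s_r)$ is bounded below by a positive constant.

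The paper avoids this entirely by importing from \cite{benjamini-paquette-pfeffer2014} a \emph{global} containment estimate rather than a single-edge tail: for $L$ large enough,
\[
\P\bigl(B_r(o) \not\subset B^M_{Lr}(o)\bigr) \le e^{-cr}, \qquad c = e^{\delta L}.
\]
This is already an $r$-step statement with failure probability exponentially small in $r$, and it cannot be recovered from the edge-length tail by a naive union bound. With it in hand, the paper sets $K = \max\{r : B_r(o)\not\subset B^M_{Lr}(o)\}$, obtains $\P(K\ge k)\lesssim e^{-ck}$ directly, and then the second-moment decomposition according to $K$, Slivnyak's formula, and the Cauchy--Schwarz finish proceed exactly as you outline. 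Your endgame is correct; what is missing is the right input from \cite{benjamini-paquette-pfeffer2014}.
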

\begin{proof}
For $r>0$, let $B_r^M(o)$ be the hyperbolic ball of radius $r$ centered at $o=0\in \H^d$. By \cite[Proposition 4.1]{benjamini-paquette-pfeffer2014}, the following estimate holds: there are constants $\delta>0$ and $L_0>0$ such that for any $L\geq L_0$ 
$$\P\left(B_r(o)\not\subset B_{L r}^M(o)\right)\leq e^{-cr},$$
where $c=e^{\delta L}$. We fix $L\geq L_0$ such that $2L<c$. Consider $K(\Pi_o)=\max\{r:B_r(o)\not\subset B_{L r}^M(o)\}$. Then, using the previous estimate, we obtain
$$\P(K\geq k)\leq \sum_{r\geq k}e^{-cr}\lesssim e^{-ck},$$
We decompose the expectation according to the values of $K$,
\begin{align*}
\E|B_r(o)|^2&=\sum_{k=1}^\infty\E\left[|B_r(o)|^2\mathds{1}_{\{K=k\}}\right]\\
&\leq\sum_{k=1}^{r-1}\E\left[\left|\Pi_o\cap B_{L r}^M(o)\right|^2\mathds{1}_{\{K=k\}}\right]+\sum_{k=r}^\infty\E\left[|B_{k+1}(o)|^2\mathds{1}_{\{K=k\}}\right]\\
&\leq \sum_{k=1}^{r-1}\E\left[\left|\Pi_o\cap B_{L r}^M(o)\right|^2\mathds{1}_{\{K=k\}}\right]+\sum_{k=r}^\infty\E\left[\left|\Pi_o\cap B_{L (k+1)}^M(o)\right|^2\mathds{1}_{\{K=k\}}\right]
\end{align*}
We first bound from above the second term of the right hand side. As before, let $g:\mathcal{D}\times \H^d\to\R$ be the function
$$g(Z,z)=\mathds{1}_{Z\cap B_{L(k+1)}^M(o)}(z)\mathds{1}_{\{K(Z)=k\}}.$$
By Slyvniak's formula
\begin{align*}
\E\left[\left|\Pi_o\cap B_{L (k+1)}^M(o)\right|^2\mathds{1}_{\{K=k\}}\right]&=\int_{B_{L(k+1)}^M(o)^2}\P(K(\Pi_o\cup\{y_1,y_2\})=k)\d(y_1,y_2)\\
&\lesssim e^{2L(k+1)}e^{-ck}\lesssim e^{(2L-c)(k+1)}
\end{align*}
This implies that
$$\sum_{k=r}^\infty\E\left[\left|\Pi_o\cap B_{L (k+1)}^M(o)\right|^2\mathds{1}_{\{K=k\}}\right]\lesssim \sum_{k\geq r}e^{(2L-c)(k+1)}=C_r,$$
and $C_r\to 0$ when $r\to\infty$.

For the first term, we have
\begin{align*}
\sum_{k=1}^{r-1}\E\left[\left|\Pi_o\cap B_{L r}^M(o)\right|^2\mathds{1}_{\{K=k\}}\right]&\leq \E\left[\left|\Pi_o\cap B_{L r}^M(o)\right|^2\right]\lesssim e^{2L r}.
\end{align*}
In summary, we obtained
$$\E|B_r(o)|^2\lesssim e^{2L r}+C_r=O(e^{2Lr}).$$
The proof concludes as in the previous lemma by applying the Cauchy-Schwarz inequality.
\end{proof}

We now establish unimodularity of the Poisson-Delaunay graphs in both the Euclidean and Hyperbolic case using Slivnyak's formula. The proof also applies to the more general case when $M$ is a symmetric space. A different proof of the same result is given in \cite{benjamini-paquette-pfeffer2014}.

\begin{lemma}
The Poisson-Delaunay graph rooted at $o = 0 \in M$ is unimodular.
\end{lemma}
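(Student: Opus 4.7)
The plan is to reduce the unimodularity identity to a statement about the integrand of the Mecke--Slivnyak formula, and then exploit the homogeneity of $M$ via a point-symmetry swapping $o$ and a second marked point $y$.

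First, for any non-negative measurable function $F$ on isomorphism classes of two-pointed graphs, split off the $y=o$ term and apply Slivnyak's formula with $n=1$ to obtain
\[
\E\Bigl(\sum_{y\in V(X)} F(X,o,y)\Bigr) = \E\bigl[F(X,o,o)\bigr] + \int_M \E\bigl[F\bigl(X(\Pi\cup\{o,y\}),o,y\bigr)\bigr]\,\d y,
\]
where $X(Z)$ denotes the Delaunay graph on the point set $Z$. Performing the same manipulation with the two roles of $o$ and $y$ interchanged yields the analogous expression for $\E(\sum_y F(X,y,o))$, and the diagonal terms $\E[F(X,o,o)]$ cancel on both sides. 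Thus the unimodularity identity reduces to the pointwise equality (for a.e.~$y\in M$)
\[
\E\bigl[F\bigl(X(\Pi\cup\{o,y\}),o,y\bigr)\bigr] \;=\; \E\bigl[F\bigl(X(\Pi\cup\{o,y\}),y,o\bigr)\bigr].
\]

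The key observation is that in both $M=\R^d$ and $M=\H^d$, the geodesic midpoint symmetry $\sigma_y$ (the involution that fixes the midpoint of the geodesic segment from $o$ to $y$) is an isometry of $M$ that swaps $o$ and $y$. Since the intensity measure of $\Pi$ is the volume measure of $M$, which is $\sigma_y$-invariant, the pushforward process $\sigma_y(\Pi)$ has the same law as $\Pi$. Moreover, the Delaunay graph construction is tautologically equivariant under isometries: $X(\sigma_y(Z))$ is the image of $X(Z)$ under the map induced by $\sigma_y$ on vertices. Therefore
\[
\bigl(X(\Pi\cup\{o,y\}),\,o,\,y\bigr) \;\cong\; \bigl(X(\sigma_y^{-1}(\Pi)\cup\{o,y\}),\,y,\,o\bigr)
\]
as two-pointed graphs (the isomorphism being $\sigma_y$ itself), and taking expectations together with the distributional invariance $\sigma_y^{-1}(\Pi)\stackrel{d}{=}\Pi$ gives the required pointwise equality.

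The only mild care needed is to check that the exceptional set where $\Pi\cup\{o,y\}$ fails to be in ``general position'' (so that the Delaunay graph may be ambiguous) has measure zero, but this is standard for Poisson processes on a Riemannian manifold. I do not foresee a genuine obstacle; the only bookkeeping point is to verify that the $y=o$ diagonal term appears symmetrically on both sides of the unimodularity identity so that it legitimately cancels, which follows from writing $V(X)=\Pi\cup\{o\}$ and evaluating $F(X,o,o)$ in both decompositions.
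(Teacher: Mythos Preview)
Your proposal is correct and follows essentially the same argument as the paper: apply Slivnyak's formula to turn the sum into an integral over $y\in M$, then use the geodesic midpoint symmetry swapping $o$ and $y$ together with the isometry-invariance of the Poisson process and the equivariance of the Delaunay construction. Your handling of the diagonal term $y=o$ is in fact slightly more careful than the paper's, which silently passes between sums over $\Pi_o$ and sums over $\Pi$; as you note, this term appears symmetrically and cancels.
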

\begin{proof}
Given a discrete subset $D$ of $M$ and two points $x,y \in D$, let $\pi(D,x,y)$ be the Delaunay graph associated to $D$ with two ordered root vertices corresponding to $x$ and $y$. The codomain of $\pi$ is the space of isomorphism classes of graphs with two roots.

Recall that $\Pi$ is a Poisson point process with intensity $1$.  The aim is to show that the Delaunay graph associated to $\Pi_o$ rooted at $o$ is unimodular.  From Slivnyak's formula, we obtain for any measurable function on the space of graphs with two roots:
\[\E\left(\sum\limits_{x \in \Pi_o}F(\pi(\Pi_o,o,x))\right) = \int_{M} \E\left(F(\pi(\Pi\cup \lbrace o,y\rbrace,o,y))\right) \d\mu(y).\]

For each $y \in M$, the expected value in the integral on the right-hand side can be written as
\[\E\left(F(\pi(\Pi\cup \lbrace o,y\rbrace,o,y))\right) = \E\left(F(\pi(\Pi'\cup \lbrace o,y\rbrace,o,y))\right)\]
where $\Pi'$ is obtained from $\Pi$ by symmetry with respect to the midpoint of the geodesic segment $[o,y]$.  The equality follows because the distribution of $\Pi$ is invariant under isometries of $M$.

Next notice that $\pi(\Pi' \cup \lbrace o,y\rbrace,o,y) = \pi(\Pi\cup \lbrace o,y\rbrace,y,o)$, that is, the two graphs are isomorphic with an isomorphism which preserves the ordered basepoints. Hence, applying Slivnyak's formula again, we obtain
\[\E\left(\sum\limits_{x \in \Pi}F(\pi(\Pi,o,x))\right) = \E\left(\sum\limits_{x \in \Pi}F(\pi(\Pi,x,o))\right),\]
so that the Poisson-Delaunay graph is unimodular as claimed.
\end{proof}

The first part of Theorem \ref{Delaunay} follows directly from Lemma \ref{eucdelaunay}, Theorem \ref{maintheorem} and Lemma \ref{speedlemma}, since in that case $v_{\mathbb{Q}}(X,o)=0$. In the second part, by \cite[Theorem 1.\,1 and Theorem 1.\,4]{benjamini-paquette-pfeffer2014}, the random walk on the 2-dimensional Hyperbolic Delaunay graph has positive linear drift, so we can conclude as before using Lemma \ref{hypdelaunay}.  As far as the authors are aware there there are no results on the speed of the random walk on Hyperbolic Poisson-Delaunay graphs of dimension $d > 2$ in the literature.

\def\cprime{$'$}

\section*{Acknowledgments}

The authors would like to thank Itai Benjamini and Nicolas Curien for their help and encouragement during the course of this work, and Benjamini, Paquette, and Pfeffer for suggesting the application to the Hyperbolic Poisson Delaunay tesselation and sharing their ongoing work on this random graph with us (see \cite{benjamini-paquette-pfeffer2014}).
\end{document}